\numberwithin{equation}{section}
\newtheorem{theo}{Theorem}[section]
\newtheorem*{Theorem*}{Theorem}
\newtheorem{coro}[theo]{Corollary}
\newtheorem{lemm}[theo]{Lemma}
\newtheorem{prop}[theo]{Proposition}
\theoremstyle{definition}
\newtheorem{defi}[theo]{Definition}
\newtheorem{exam}[theo]{Example}
\newtheorem{rema}[theo]{Remark}
\newcommand{\supp}{\mathrm{supp}}
\renewcommand{\ker}{\mathrm{ker}}
\newcommand{\End}{\mathrm{End}}
\newcommand{\Hom}{\mathrm{Hom}}
\newcommand{\Res}{\mathrm{Res}}
\newcommand{\pardeg}{\mathrm{pardeg}}
\newcommand{\disc}{\mathrm{disc}}
\newcommand{\reg}{\mathrm{reg}}
\newcommand{\ord}{\mathrm{ord}}
\renewcommand{\div}{\mathrm{div}}
\newcommand{\Div}{\mathrm{Div}}
\newcommand{\Jac}{\mathrm{Jac}}
\newcommand{\codim}{\mathrm{codim}}
\newcommand{\toy}{\mathrm{toy}}
\renewcommand{\and}{\mathrm{and}}
\newcommand{\rest}{\: \rule[-3.5pt]{0.5pt}{11.5pt}\,{}}
\renewcommand{\to}{\rightarrow}
\newcommand{\wh}{\widehat}
\newcommand{\ualpha}{\underline{\alpha}}
\newcommand{\calB}{\mathcal{B}}
\newcommand{\calE}{\mathcal{E}}
\newcommand{\calF}{\mathcal{F}}
\newcommand{\calI}{\mathcal{I}}
\newcommand{\calL}{\mathcal{L}}
\newcommand{\calM}{\mathcal{M}}
\newcommand{\calN}{\mathcal{N}}
\newcommand{\calO}{\mathcal{O}}
\newcommand{\calP}{\mathcal{P}}
\newcommand{\ZZ}{\mathbb{Z}}
\newcommand{\Z}{\mathbb{Z}}
\newcommand{\RR}{\mathbb{R}}
\newcommand{\CC}{\mathbb{C}}
\newcommand{\C}{\mathbb{C}}
\newcommand{\PP}{\mathbb{P}}
\newcommand{\id}{\mathrm{id}}
\newcommand{\tr}{\mathrm{tr}}
\newcommand{\diag}{\mathrm{diag}}
\renewcommand{\d}{\mathrm{d}}
\DeclareMathOperator{\SFM}{SFM}
\DeclareMathOperator{\FM}{FM}
\newcommand{\Prym}{\mathrm{Prym}}
\newcommand{\Nm}{\mathrm{Nm}}
\newcommand{\Hit}{\mathrm{Hit}}
\newcommand{\GL}{\mathrm{GL}}
\newcommand{\SL}{\mathrm{SL}}
\newcommand{\Sp}{\mathrm{Sp}}
\newcommand{\PSL}{\mathrm{PSL}}
\newcommand{\PGL}{\mathrm{PGL}}
\newcommand{\gl}{\mathfrak{gl}}
\begin{document}

\allowdisplaybreaks

\newcommand{\arXivNumber}{2310.15716}

\renewcommand{\PaperNumber}{022}

\FirstPageHeading

\ShortArticleName{Visible Lagrangians for Hitchin Systems and Pillowcase Covers}

\ArticleName{Visible Lagrangians for Hitchin Systems\\ and Pillowcase Covers}

\Author{Johannes HORN and Johannes SCHWAB}

\AuthorNameForHeading{J.~Horn and J.~Schwab}

\Address{Goethe-Universit\"at Frankfurt am Main, Institut f\"ur Mathematik,\\ Robert-Mayer-Str. 6-8, 71732 Frankfurt, Germany}
\Email{\mail{horn@math.uni-frankfurt.de}, \mail{schwab@math.uni-frankfurt.de}}
\URLaddress{\url{https://www.uni-frankfurt.de/115632224/Dr__Johannes_Horn}, \newline
\hspace*{10.5mm}\url{https://www.uni-frankfurt.de/115634639/Johannes_Schwab}}

\ArticleDates{Received September 18, 2025, in final form February 18, 2026; Published online March 09, 2026}

\Abstract{We study complex Lagrangians in Hitchin systems that factor through a proper subvariety of the Hitchin base non-trivially intersecting the regular locus. This gives a~general framework for several examples in the literature. We compute the fiber-wise Fourier--Mukai transform of flat line bundles on visible Lagrangians. This proposes a construction of mirror dual branes to visible Lagrangians. Finally, we study a new example of visible Lagrangians in detail. Such visible Lagrangian exists whenever the underlying Riemann surface is a pillowcase cover. The proposed mirror dual brane turns out to be closely related to Hausel's toy model.}

\Keywords{Higgs bundles; flat surfaces; mirror symmetry}

\Classification{14H60; 30F30; 14H70}

\section{Introduction}
Mirror symmetry questions about Higgs bundle moduli spaces have been intensively studied in recent years. The work of Hausel and Thaddeus \cite{HauselThaddeus} initiated the direction of research by observing the SYZ mirror symmetry of the $\SL(n,\CC)$- and $\PGL(n,\CC)$-Hitchin system and proving topological mirror symmetry for $n=2,3$. Later Donagi and Pantev \cite{DonagiPantev} established the duality between Hitchin systems associated to a complex reductive Lie group $G$ and its Langlands dual group $G^{\rm L}$. To a complex reductive group $G$ and a closed Riemann surface~$X$, we associate a~moduli space of $G$-Higgs bundles $\calM_G$ with a Hitchin map $\Hit\colon \calM_G \to \calB_G$ to a half-dimensional vector space. Then Donagi and Pantev showed that there is an isomorphism $\calB_G \cong \calB_{G^{\rm L}}$, such that the generic fibers over corresponding points under this isomorphism are torsors over dual abelian varieties. Furthermore, the Fourier--Mukai transform yields an equivalence of derived categories of the regular loci of the $G$ and $G^{\rm L}$-Hitchin system.

About the same time the work of Kapustin and Witten \cite{KapustinWitten} raised the question about mirror symmetry of special subvarieties referred to as branes in the physical literature. A~brane is a~pair $(\calN,F)$ of a subvariety $\calN$ and a sheaf $F$ supported on $\calN$ with special geometric properties. This initiated plenty of mathematical research to find examples of branes or their supports for $G$\nobreakdash-Hitchin system \cite{BaragliaSchaposnik1,BaragliaSchaposnik2, FrancoJardim, FrancoPeon, HauselHitchin, HellerSchaposnik, HitchinSpUmm, HitchinSpinors}. However, \cite{KapustinWitten} also propose a correspondence between branes under mirror symmetry. This seems to be less considered in the mathematical literature (see \cite{FrancoJardim, FrancoPeon, HauselHitchin, HitchinSpUmm} for exceptions).

In this paper, we consider so-called $(B,A,A)$-branes, that is pairs $(\calN,F)$, where $\calN$ is a~complex Lagrangian subvariety and $F$ is a flat bundle on $\calN$. We also describe the subvarieties and sheaves related to these $(B,A,A)$-branes by the Fourier--Mukai transform. The work of Kapustin and Witten \cite{KapustinWitten} suggests that these are $(B,B,B)$-branes, i.e., hyperholomorphic subvarieties with a hyperholomorphic sheaf. We give indication for this conjecture for our main example.

More specifically, we are interested in complex Lagrangians $\calL$ such that the restriction of the Hitchin map factors through a proper subvariety $\calB'=\Hit(\calL)\subsetneq \calB$. Such Lagrangians are called \textit{visible} in the symplectic geometry literature \cite{visible}. This is complementary to the recent work of Hausel and Hitchin~\cite{HauselHitchin}, who studied the upward flow to certain points in the nilpotent cone.

We first abstractly consider visible Lagrangians in the $G=\GL(n,\CC)$ and $G=\SL(n,\CC)$-Hitchin system and describe their proposed mirror dual by computing the Fourier--Mukai transform of flat sheaves on them. The first main result is the following.

\begin{theo}[{Theorem~\ref{theo:FM}}]\label{theo:1}
 Let $\calL \subset \calM_G$ be a visible Lagrangian, such that $\calB'=\Hit(\calL) \subsetneq \calB_G$ and $\calB' \cap \calB^\reg_G \neq \varnothing$. Let $s\colon \calB' \to \calL$ be a section of $\Hit\rest_{\calL}$. The fiber-wise Fourier--Mukai transform of the structure sheaf $\calO_\calL$ is supported on a holomorphic symplectic subvariety $\calI_{\calL,s} \subset \calM_{G^{\rm L}}$ such that $\Hit\rest_{\calI}\colon \calI_{\calL,s} \to \calB'$ is an algebraically completely integrable system.
\end{theo}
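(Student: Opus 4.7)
The plan is to reduce to a pointwise Fourier--Mukai computation on each regular Hitchin fiber, and then check that the resulting support assembles into an algebraically completely integrable system over $\calB'$.

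First, I would analyze the fibers of $\Hit\rest_{\calL} : \calL \to \calB'$ over points $b \in \calB' \cap \calB_G^\reg$. Since $\calL$ is Lagrangian of dimension $\dim \calB_G$ and projects onto $\calB'$, the fiber $\calL_b := \calL \cap \Hit^{-1}(b)$ has pure dimension $\dim \calB_G - \dim \calB'$. The Lagrangian condition, combined with the fact that $\Hit^{-1}(b)$ is itself Lagrangian, forces the tangent space $T_{s(b)} \calL_b$ inside $T_{s(b)} \Hit^{-1}(b) = T_b^* \calB_G$ to coincide with the annihilator $(T_b \calB')^\perp$. By rigidity of subvarieties of abelian varieties, this forces $\calL_b = s(b) + \calA_b$, where $\calA_b \subset \Hit^{-1}(b)$ is the abelian subvariety integrating the constant distribution $(T_b \calB')^\perp$.

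Next, I would compute the fiberwise Fourier--Mukai transform. By Donagi--Pantev, the Hitchin fiber $\Hit^{-1}(b) \subset \calM_{G^L}$ is the dual abelian variety of $\Hit^{-1}(b) \subset \calM_G$. The transform of $\calO_{\calA_b}$ is, up to homological shift, the structure sheaf of the orthogonal subvariety $\calA_b^\perp$, whose tangent space is canonically $T_b \calB'$; translation by $s(b)$ on the source corresponds to tensoring with the degree-$0$ line bundle on the Langlands dual fiber that $s(b)$ determines via duality. The support of the fiberwise transform of $\calO_\calL$ is therefore $\calI_{\calL,s} := \bigcup_b \calA_b^\perp \subset \calM_{G^L}$, carrying a sheaf that is a twist of its structure sheaf by this line bundle.

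Finally, I would verify the ACIS statement. The dimension count gives $\dim \calI_{\calL,s} = \dim \calB' + \dim \calA_b^\perp = 2 \dim \calB'$, so the fibers $\calA_b^\perp$ of $\Hit\rest_{\calI_{\calL,s}}$ are half-dimensional. They are automatically isotropic in $\calM_{G^L}$ since $\calA_b^\perp \subset \Hit^{-1}(b)$ is contained in a Lagrangian. Non-degeneracy of the restriction of the Hitchin symplectic form to $\calI_{\calL,s}$ then reduces, via the symplectic tangent-cotangent identification at regular Hitchin fibers together with the Langlands-dual identification $T \calA_b^\perp \cong T_b \calB'$, to the perfect pairing between horizontal and vertical copies of $W := T_b \calB'$. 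This yields the holomorphic symplectic structure on $\calI_{\calL,s}$ with Lagrangian fibration, as claimed. The main technical obstacle will be this globalization step: showing that the family $\{\calA_b^\perp\}_b$ varies algebraically so that $\calI_{\calL,s}$ is genuinely a subvariety, that the twist by $s$ assembles into a globally defined sheaf, and that the symplectic pairing matches on the nose under Donagi--Pantev duality. Once established over the open dense $\calB' \cap \calB_G^\reg$, the remaining structure follows by closure.
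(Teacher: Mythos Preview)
Your approach is essentially the same as the paper's: identify the fibers of $\calL$ over $\calB'^\reg$ as translates of an abelian subscheme $A$, take the dual exact sequence to get $Q^\vee$ inside the Langlands-dual Prym scheme, and then verify non-degeneracy of the restricted symplectic form by the complementarity of $V\calL$ and $V\calI$ together with the Lagrangian condition on $\calL$. The paper packages the first step via its Theorem~\ref{theo:visible_Lagr} (rather than appealing to ``rigidity''), and for the Fourier--Mukai computation it does not assert the result as folklore but checks concretely that $\iota_*\calO_A$ is a GV-sheaf in the sense of Pareschi--Popa and then invokes Schnell's compatibility $\SFM_P(\iota_*\calO_A)=(\iota^\vee)^*\SFM_A(\calO_A)=\calO_{Q^\vee}$. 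This buys a clean identification of the transform as a sheaf (WIT), not just up to shift.

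Two small points where your write-up drifts from the paper. First, the paper uses the section $s$ itself (and $s'=\delta\circ s$ on the dual side) to trivialize the torsors, so $s(b)$ \emph{is} the origin and there is no residual translation or line-bundle twist to track; your discussion of ``tensoring with the degree-$0$ line bundle that $s(b)$ determines'' is superfluous in this setup and risks confusing the role of $s$. Second, both the paper's Theorem~\ref{theo:FM} and your rigidity step tacitly need the fibers $\calL_b$ to be connected (otherwise one gets a finite union of translates and the transform is a direct sum, cf.\ Theorem~\ref{theo:Lagr_SL_in_GL_multisection}); you should state this hypothesis explicitly.
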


Note that every hyperholomorphic subvariety is holomorphic symplectic. Hence, this observation fits well with the mirror symmetry proposal in~\cite{KapustinWitten}.

On the question of existence, we mention three situations under which we expect visible Lagrangians, examples of which appeared in the literature. The first is Lie-theoretic. An~inclusion of a semisimple Lie group $G_1$ into a reductive Lie group $G_2$ defines a morphism of Higgs bundle moduli spaces $\calM_{G_1} \to \calM_{G_2}$. Its image is a hyperholomorphic subvariety. We expect the mirror dual to this hyperholomorphic subvarieties to be visible Lagrangians. We consider the example of $\SL(n,\CC) \subset \GL(n,\CC)$ in Section~\ref{sec:visi_Lagra_Hitchin}. Another example where $G_1=\Sp(2n,\CC)$ and $G_2=\GL(2n,\CC)$ was considered by Hitchin in \cite{HitchinSpUmm}. Other than that, we hope to return to this type of visible Lagrangians in a subsequent work and will mainly focus on $G=\SL(2,\CC)$ in the remainder of the paper.
The second type of visible Lagrangians appeared in the work of \cite{FGOP, FHHO, FrancoPeon}. They are completely contained in the singular locus of the Hitchin map and substantially use the geometry of the singular Hitchin fibers. In particular, they do not fall within the scope of this work.

The focus of this work is a third type of visible Lagrangians related to the symmetries of the underlying Riemann surface. The general fiber of the Hitchin system is a torsor over an abelian variety. A necessary condition for the existence of a visible Lagrangian $\calL \to \calB'$ is that the Hitchin fibers over $\calB'$ correspond to reducible (i.e., non-simple) abelian varieties. Comparing the dimension of the $\SL(2,\CC)$-Hitchin base and the reducible locus in the corresponding moduli space of abelian varieties suggests that there are finitely many directions in the Hitchin base, where the Hitchin fiber is isomorphic to a reducible abelian variety. Hence, it is natural to look for visible Lagrangians $\calL \to \calB'$ over lines $\CC \underline{a} \subset \calB_G$ in the Hitchin base. We have the following second main theorem, of which a $\SL(n,\CC)$-version is proven in Theorem~\ref{theo:visi_Lagr_over_line}.

\begin{theo}[Corollary \ref{coro:visible_Lagr_over_line_sl2C}]\label{theo:intro:1-dim_examples}
 Let $q \in H^0\bigl(X,K_X^2\bigr)$ be a quadratic differential with simple zeros only. Then there exists a visible Lagrangian
 \[ \calL \to \calB'=\{t q \mid t \in \CC\} \subset \calB_{\SL(2,\CC)}(X)
 \]
 if and only if $(X,q)$ is a pillowcase cover.
\end{theo}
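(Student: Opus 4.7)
The plan is to reduce existence of a visible Lagrangian over $\calB' = \CC\cdot q$ to a rationality condition on the period lattice of the canonical form $\lambda = \sqrt q$ on the spectral cover, and then to identify that condition with the definition of a pillowcase cover.

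Setup: since $q$ has only simple zeros, the spectral curve $\pi:\tilde X \to X$ cut out by $\lambda^2 = \pi^*q$ in the total space of $K_X$ is smooth of genus $4g-3$, and carries the spectral involution $\sigma$ with $\sigma^*\lambda = -\lambda$. For $t\in\CC^*$ the curves associated to $tq$ are canonically isomorphic to $\tilde X$ via rescaling $\lambda$, so the Hitchin fibers over $\CC^*\cdot q$ are torsors over a single abelian variety $P := \Prym(\tilde X/X)$ of dimension $3g-3$. In the tangent space $T_0 P = H^0(\tilde X,\Omega^1)^-$ the form $\lambda$ is a distinguished element, and the Hitchin symplectic form realizes $T_0 P$ as dual to the base tangent $T_q \calB_{\SL(2,\CC)} = H^0(X, K_X^2)$.

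The first step is to apply the $\SL(n,\CC)$-version of the theorem (Theorem \ref{theo:visi_Lagr_over_line}) with $n=2$: a visible Lagrangian $\calL \to \CC\cdot q$ corresponds to a codimension-one sub-abelian variety $A\subset P$ whose tangent space is the hyperplane $\lambda^\perp\subset T_0 P$ cut out by the Hitchin pairing with $q\in T_q\calB$. The alignment is obtained as follows: picking a section $s$ of $\Hit|_\calL$ and differentiating along $\CC\cdot q$ produces a tangent vector to $\calL$ whose fiber component is proportional to $\lambda$ (the infinitesimal version of the fact that the Hitchin section moves along the $\lambda$-direction as $t$ varies). The condition $\omega_\calM|_\calL = 0$ then forces the remaining $(3g-4)$-dimensional tangent to lie in $\lambda^\perp$, and passing from this infinitesimal condition to an actual sub-abelian variety requires $\lambda^\perp$ to be rational with respect to the Prym period lattice $H_1(\tilde X,\ZZ)^-$.

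The second step identifies this rationality with the pillowcase condition. Rationality of $\lambda^\perp$ is equivalent to the quotient $E := P/A$ being an elliptic curve, which in turn is equivalent to the $\ZZ$-span of the periods of $\lambda$ forming a discrete rank-two lattice $L\subset \CC$. Given such an $L$, the map $\phi: \tilde X \to E := \CC/L$ defined by $p\mapsto \int_{p_0}^p\lambda \pmod L$ satisfies $\phi\circ\sigma = (-1)_E\circ\phi$ (since $\sigma^*\lambda = -\lambda$), and hence descends to a non-constant morphism $f: X = \tilde X/\sigma \to E/\langle -1\rangle = \calP$ with $f^*(dz)^2 = q$, exhibiting $(X,q)$ as a pillowcase cover. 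Conversely, pullback from a pillowcase cover obviously produces such a discrete period lattice, and the corresponding $A$ globalizes to a visible Lagrangian by taking fiberwise translates via the Hitchin section over the line.

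The main obstacle is the first step: precisely identifying the fiber direction of $s'(tq)$ with $\lambda$ and converting Lagrangianity of $\calL$ into the hyperplane condition $T_0 A = \lambda^\perp$ in intrinsic terms. Once this linear-algebraic translation is in place, the passage to pillowcases via the classical period-lattice dictionary sketched above is essentially formal, and the only remaining subtleties are checking closedness and Lagrangianity of the global $\calL$ near the discriminant fiber at $t=0$.
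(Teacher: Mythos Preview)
Your approach is essentially the paper's: the corollary there is deduced in one line from Theorem~\ref{theo:visi_Lagr_over_line} (visible Lagrangian over $\CC\underline{a}$ iff the spectral curve $(\Sigma,\lambda)$ is parallelogram-tiled) together with Lemma~\ref{lemm:pillow} (the canonical cover is parallelogram-tiled iff $(X,q)$ is a pillowcase cover). You unpack both ingredients and merge them, passing directly from the sub-abelian-variety criterion to the pillowcase map via $\phi(p)=\int_{p_0}^{p}\lambda$ and its descent under $\sigma$, thereby skipping the intermediate notion ``parallelogram-tiled''; this is the same construction up to bookkeeping (the paper's map $\psi\circ\mathsf{AP}$ is your $\phi$ up to a factor of~$2$ and a translation, provided $p_0$ is chosen as a ramification point). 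One imprecision worth correcting: your justification of $T_0A=\lambda^\perp$ via ``the fiber component of $s'(tq)$ is proportional to $\lambda$'' is not the right mechanism. The identification of the base tangent direction $q$ with $\lambda\in H^0(\Sigma,K_\Sigma)^-$ is Proposition~\ref{prop:Baraglia}, and then Proposition~\ref{prop:criterion} turns the Lagrangian condition into $T_0A=\lambda^\perp$ under the Serre pairing; no section needs to be differentiated. Your closing worry about the fiber at $t=0$ is unnecessary, since the Lagrangian is simply taken as the closure of the orbit over $\CC^\times q$.
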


The notion of pillowcase cover stems from the theory of flat surfaces. It means that there is a~covering $X \to \PP^1$, such that the quadratic differential $q$ is the pullback of a quadratic differential on $\PP^1$ with four simple poles. The later should be figured as a pillowcase, see Figure~\ref{fig:Pillowcase}. We give a short introduction to the idea of flat surfaces in Section~\ref{sec:rhomb-tiled-surf}.

Motivated by the above considerations on the moduli space of abelian varieties, in Section~\ref{sec:pill-sever-ways}, we study examples of Riemann surfaces, where there exist several lines in the $\SL(2,\CC)$-Hitchin base associated to visible Lagrangians. We prove the following result, which might be of independent interest from the point of view of flat surfaces.

\begin{theo}[{Proposition~\ref{prop:non-isomorphic_pillow}}]\label{theo:intro_3}
 There exist infinitely many genera $g$, such that there exists a~Riemann surface~$X$ of genus~$g$ with two quadratic differentials $q_1$, $q_2$ with simple zeros only, such that $(X,q_i)$ are pillowcase covers and $q_1$, $q_2$ are not related by pullback along an automorphism of~$X$.
\end{theo}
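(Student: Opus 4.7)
The strategy is to exhibit, for infinitely many $g$, a Riemann surface $X$ of genus $g$ together with two pillowcase covers $\pi_1, \pi_2 : X \to \PP^1$ whose pullback quadratic differentials $q_1, q_2$ satisfy $\phi^* q_2 \neq q_1$ for every $\phi \in \Aut(X)$. By the correspondence underlying Theorem~\ref{theo:intro:1-dim_examples}, such a pair $(X, q)$ (with $q$ having only simple zeros) corresponds to a branched cover $\pi : X \to \PP^1$ with branch locus of size four and local ramification indices in $\{2, 3\}$, combinatorially encoded by a monodromy tuple $(\sigma_1, \sigma_2, \sigma_3, \sigma_4) \in S_d^4$ where each $\sigma_j$ is a product of disjoint $2$- and $3$-cycles, $\sigma_1 \sigma_2 \sigma_3 \sigma_4 = \id$, and the tuple generates a transitive subgroup.

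First, I would produce a single low-genus base example $(X_0, q_1, q_2)$ by taking a curve $X_0$ with a finite group $G \leq \Aut(X_0)$ containing two non-conjugate subgroups $H_1, H_2$ such that each quotient $X_0 / H_i \cong \PP^1$ realizes a pillowcase cover of the required ramification type. Natural candidates are abelian covers of $\PP^1$ (for instance $A = (\ZZ/3\ZZ)^2$-Galois covers with carefully chosen inertia subgroups at the four pillow points, where distinct rank-one quotients yield distinct $\PP^1$-factorings), or explicit cyclic covers $y^n = f(x)$ equipped with additional Möbius symmetries. To pass from one example to infinitely many genera, I would then take connected \'etale covers $p_n : \wt X_n \to X_0$ of degree $n$: the pullbacks $\wt q_i^{(n)} := p_n^* q_i$ remain pillowcase covers via the composites $\wt X_n \to X_0 \to \PP^1$ (since \'etale maps preserve ramification indices), while $\wt X_n$ has genus $n(g_0 - 1) + 1$, attaining an infinite set of genera as $n$ varies.

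The main difficulty is the inequivalence of $q_1$ and $q_2$ under $\Aut(X)$. A convenient invariant is the canonical double cover $\tilde X_i \to X$ branched at the zero divisor $Z(q_i)$, which carries a holomorphic $1$-form $\omega_i$ with $\omega_i^2 = \tpst q_i$ and hence cuts out a Prym-type sub-abelian variety of $\Jac(X)$ via the deck involution. Its isogeny class is an $\Aut(X)$-invariant of $(X, q_i)$; by arranging the monodromy groups $G_1, G_2 \leq S_d$ of $\pi_1, \pi_2$ to be non-conjugate (equivalently, the associated intermediate covers non-isomorphic), one forces these Pryms to be non-isogenous, and so no automorphism can intertwine $q_1$ and $q_2$. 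The main obstacle is to propagate this distinction along the \'etale bootstrap: one must choose $p_n$ generically in $H^1(X_0, \ZZ/n\ZZ)$ so that $\Aut(\wt X_n)$ consists only of the expected lifts, namely $\mathrm{Deck}(p_n) \rtimes \Aut(X_0)_{\text{lifting}}$; this is a Zariski-open condition on the cover class that is straightforward to satisfy but that I expect to require the most technical care in the write-up.
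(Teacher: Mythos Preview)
Your overall architecture matches the paper's: produce one base example $X_0$ as a Galois cover of $\PP^1$ carrying two pillowcase structures coming from two different subgroups, distinguish the two differentials via their canonical double covers, then pass to unramified covers of $X_0$ to hit infinitely many genera (this is exactly the bootstrap in the proof of Theorem~\ref{theo:multifold_pillow}). The paper makes this concrete with $G=A_4\times\ZZ/3\ZZ$ acting on a genus~$4$ curve and the two subgroups $H_1\cong(\ZZ/3\ZZ)^2$, $H_2\cong A_4$; your proposed $(\ZZ/3\ZZ)^2$-covers are not worked out, and in fact a naive choice of inertia there does \emph{not} yield four-branch-point quotients $X/H\cong\PP^1$, so you would still need to produce an actual example.

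Where you diverge more substantially is the invariant. You propose the isogeny class of the Prym of the canonical cover, and claim that non-conjugate monodromy groups of $\pi_1,\pi_2$ force the Pryms to be non-isogenous. That implication is not obvious and is not what is needed. The paper's argument is much simpler: the canonical covers $\Sigma_1,\Sigma_2$ are themselves covers of $\PP^1$ branched over three points, so they are determined by their monodromy tuples in $S_{2|G|}$; one checks directly that these tuples are not simultaneously conjugate, hence $\Sigma_1\not\cong\Sigma_2$ as Riemann surfaces. Since any $\phi\in\Aut(X)$ with $\phi^*q_1=q_2$ would lift to an isomorphism $\Sigma_1\cong\Sigma_2$, this already rules out such $\phi$. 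No abelian varieties enter.

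Your concern about controlling $\Aut(\wt X_n)$ in the \'etale bootstrap is legitimate; the paper's proof of Theorem~\ref{theo:multifold_pillow} passes to unramified covers without revisiting the non-isomorphism condition, so if you want the full statement for infinitely many genera you are right that this step deserves an argument.
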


Finally, we consider the subintegrable system $\calI \subset \calM_{\PGL(2,\CC)}$ of Theorem~\ref{theo:1} associated to the visible Lagrangian $\calL$ of Theorem~\ref{theo:intro:1-dim_examples}. We observe that $\calI$ is birational to Hausel's toy model~\cite{HauselToy}. Under the natural extra condition on the pillowcase cover to be \textit{uniform}, we prove the following theorem that confirms the Kapustin--Witten picture for visible Lagrangians of this kind. All the pillowcase covers of Theorem~\ref{theo:intro_3} are uniform.

\begin{theo}[{Corollary \ref{coro:hyperholomorphic}}]
 Let $(X,q)$ be a uniform pillowcase cover with simple zeros only and $\calL \to \CC q$ the visible Lagrangian of Theorem~{\rm \ref{theo:intro:1-dim_examples}}. Then the associated subintegrable system $\calI \subset \calM_{\PGL(2,\CC)}$ of Theorem~{\rm \ref{theo:1}} is a hyperholomorphic subvariety.
\end{theo}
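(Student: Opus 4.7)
The strategy is to exhibit $\calI$ as the image of a natural pullback morphism from an elliptic-curve Higgs bundle moduli space. Any such pullback is a morphism of hyperkähler manifolds, so its image is automatically hyperholomorphic; the uniformness hypothesis is then used to identify the image with $\calI$ (not just birationally, but as an honest subvariety on a dense open of the Hitchin line).

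By definition of a pillowcase cover, $q = \pi^* q_0$ for some quadratic differential $q_0$ on $\PP^1$ with four simple poles. Let $\tau : E \to \PP^1$ be the double cover branched at those four points; then $E$ is an elliptic curve and $\tau^* q_0 = \omega^2$ for a nowhere-vanishing holomorphic 1-form $\omega$ on $E$. The pillowcase cover then factors as $X \xrightarrow{\sigma} E \xrightarrow{\tau} \PP^1$, so that $q = \sigma^*(\omega^2)$ and $\CC q = \sigma^*(\CC \omega^2) = \sigma^* \calB_{\PGL(2,\CC)}(E)$. The pullback functor sends $\PGL(2,\CC)$-Higgs bundles on $E$ to $\PGL(2,\CC)$-Higgs bundles on $X$ and intertwines the Hitchin maps. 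Crucially, $\sigma^*$ commutes with the non-abelian Hodge correspondence, since pullback preserves flatness of connections; hence it intertwines the Dolbeault and de Rham complex structures $I$ and $J$, and therefore the entire twistor family. Consequently the image $\sigma^* \calM_{\PGL(2,\CC)}(E)$ is a hyperkähler, in particular hyperholomorphic, subvariety of $\calM_{\PGL(2,\CC)}(X)$, and it is fibered over $\CC q$ as an algebraically completely integrable subsystem.

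It remains to identify this image with $\calI$. Over a regular point $tq \in \CC q \cap \calB^\reg$, Theorem \ref{theo:FM} characterises the fiber $\calI_{tq}$ as the dual abelian subvariety of the Prym of the spectral cover $\Sigma_X \to X$ predicted by the Fourier-Mukai transform of $\calO_\calL$ with respect to the section $s$. On the other hand, the fiber of $\sigma^*\calM_{\PGL(2,\CC)}(E)$ over $tq$ is the image of the Hitchin fiber on $E$, which is a pull-back Prym attached to the split spectral cover $\Sigma_E \to E$. Matching these two abelian subvarieties reduces to showing that $\Sigma_X$ is isogenous, as a cover of $X$, to the fibered product $\Sigma_E \times_E X$, and that the Fourier-Mukai dual of the Lagrangian subtorus of $\calL_{tq}$ coincides with the $\sigma$-pullback Prym factor. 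This is the main obstacle in the argument, and it is precisely where uniformness of $(X,q)$ is needed: the assumption rigidifies the ramification profile of $\sigma$ (Galois / balanced), which forces the Prym isogeny decomposition on $X$ to agree with the pullback decomposition rather than to be merely isogenous to it. In the non-uniform case one expects at best a birational identification, consistent with the observation that $\calI$ is birational to Hausel's toy model. Once the fiberwise matching is established on a dense open of $\CC q$, taking the closure of $\sigma^*\calM_{\PGL(2,\CC)}(E)$ yields $\calI$ as a hyperholomorphic subvariety, completing the proof.
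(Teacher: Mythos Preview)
The proposal rests on a factorization $X \xrightarrow{\sigma} E \xrightarrow{\tau} \PP^1$ that does not exist. In the pillowcase picture (Lemma~\ref{lemm:pillow}) the maps to $\PP^1$ fit into the square
\[
\begin{tikzcd}
 & \Sigma \ar[ld,"\pi"'] \ar[rd,"p"] & \\
 X \ar[rd,"\check{p}"'] & & E \ar[ld,"\check{\pi}"] \\
 & \PP^1 &
\end{tikzcd}
\]
with $\Sigma$ the canonical double cover of $(X,q)$; there is no morphism $X\to E$. Indeed, since $q$ has simple zeros it is not a global square, so $\pi:\Sigma\to X$ is a genuine connected double cover and $\check{p}$ cannot factor through the degree-$2$ map $\check{\pi}$. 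Your pullback functor $\sigma^*$ is therefore undefined, and with it the whole argument.

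The paper's route is to pull back along the map that \emph{does} exist, namely $\check{p}:X\to\PP^1$, starting from the moduli space $\calM_{\ualpha}(\PP^1,D)$ of strongly parabolic $\SL(2,\CC)$-Higgs bundles on the four-punctured sphere. But naive pullback along a \emph{ramified} cover does not send solutions of Hitchin's equation to solutions: at a ramification point the pulled-back harmonic metric acquires a conical singularity. Theorem~\ref{theo:Theta} repairs this by composing $\check{p}^*$ with a Hecke modification at the ramification divisor and twisting by suitable line bundles; the local computation shows that precisely the ``compatible'' parabolic weights $(\tfrac{i+1}{i+2},\tfrac{1}{i+2})$ at a point of ramification index $i$ make the resulting metric extend smoothly. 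This is where uniformness is actually used: it guarantees that every preimage of a given $y\in D$ has the \emph{same} ramification index, so a single parabolic weight at $y$ works for all of them and the Hecke-modified pullback lands in $\calM_{\SL(2,\CC)}(X)$ with trivial determinant. Your description of uniformness as ``rigidifying a Prym isogeny decomposition'' does not capture this mechanism.
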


\section{Symplectic geometry}
\subsection{Completely integrable systems}
\begin{defi}\label{defi:integrable_system} A completely integrable system is a holomorphic symplectic manifold $(M,\Omega)$ together with a proper flat morphism $H\colon M \to B$ to a complex manifold $B$, such that on the complement $B\setminus S$ of some proper closed subvariety $S$ the fibers of $H$ are complex Lagrangian tori. It~is called algebraically completely integrable, if the Lagrangian tori are endowed with continuously varying polarizations $\rho_b \in H^{(1,1)}(M_b) \cap H^2(M_b,\ZZ)$, i.e., they are abelian varieties.
\end{defi}

We will refer to $B^\reg=B\setminus S$ as the regular locus and to $S$ as the singular locus of a completely integrable system.
\begin{defi} An integral affine structure on a smooth manifold $B$ is a torsion-free flat connection $\nabla$ on the tangent bundle $TB$ together with a $\nabla$-covariant sublattice $\Lambda \subset TB$ of maximal rank. A~submanifold $B' \subset B$ is called rational with respect to the integral affine structure, if for all $b \in B'$ the tangent space $T_bB'$ is generated by a rational linear combination of elements of~$\Lambda_b$.
\end{defi}
For completely integrable systems, there is a natural identification of the cotangent bundle to the base with the torus-invariant vector fields along to the fibers of $H$ given as follows. Let $\alpha \in T_b^\vee B$ be a (holomorphic) one-form then there exists an invariant vector field $X$ on $M_b$, such that $\Omega(X,\cdot)=H^*\alpha$. Denoting by $VM$ the bundle of invariant vector fields along to the fibers over $B^\reg$ we obtain an identification $T^\vee B^\reg \cong VM$. Locally over $U \subset B^\reg$ we can choose a section of $H\colon M \to B$ and identify $H^{-1}(U)\cong V_UM/\Lambda$ for a family of lattices $\Lambda \subset V_UM$. The above isomorphism defines a family of lattices $\Lambda \subset T^\vee B^\reg$. The dual family of lattices $\Lambda^\vee\subset TB^\reg$ defines a torsion-free flat connection on $T B^\reg$, where a section is flat if and only if it is constant with respect to lattice coordinates (see \cite[Section~3]{Freed} for more details). This defines an integral affine structure on the base of an completely integrable system.

\subsection{Visible Lagrangians}\label{sec:visible}
This idea goes back to lecture notes of Jonathan David Evans \cite[Chapter~5]{visible} in the context of real completely integrable systems.

\begin{defi}[visible Lagrangians]\label{def:visible-lagrangians}
Let $H\colon M \to B$ be a completely integrable system. A Lagrangian subvariety $\calL \subset M$ is called \textit{visible}, if it is closed and $H\rest_{\calL}\colon \calL \to B$ factors as $H\rest_{\calL}=f \circ g$, where $f\colon B' \to B$ is an embedding of a proper subvariety $B'$ such that $g\rest_{B'\setminus S'}\colon \calL\rest_{B'\setminus S'} \to B'\setminus S'$ is a smooth fiber bundle on the complement of some proper subvariety $S' \subsetneq B'$.
\end{defi}

The simplest example of a visible Lagrangian is a complex torus fiber $M_b$ with $B'=\{ b \}$. On the other hand, a Lagrangian section $s\colon B \to M$ is an example of a Lagrangian that is not visible. We denote $B'^{\reg}=B'\setminus S'$. For the visible Lagrangians considered in the present work we will mostly have $B'^\reg=B' \cap B^\reg$.

\begin{theo}\label{theo:visible_Lagr} Let $H\colon M \to B$ be a completely integrable system and $\calL \to B' \subsetneq B$ a visible Lagrangian with $B'^\reg \subset B^\reg$. Then at each smooth point $b \in B'^\reg$ the subvariety $B'$ is rational with respect to the integral affine structure on $B^\reg$ and for $b \in B'^\reg$ the fiber $\calL_b$ is a union of complex tori generated by the invariant vector fields $T_bB'^\perp \subset V_bM$.
\end{theo}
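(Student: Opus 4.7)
The plan is to work at a smooth point $b \in B'^\reg$: first compute the vertical tangent space of $\calL$ there from the Lagrangian condition, then use constancy of this tangent space along $\calL_b$ to identify $\calL_b$ as a union of cosets of a subtorus inside the compact fiber $M_b$ — the rationality of $T_b B'$ falling out in the process.

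Since $g$ is a smooth fiber bundle over $B'^\reg$, the Lagrangian $\calL$ is smooth along the whole of $\calL_b$, and $\calL_b$ is a closed complex submanifold of $M_b$ of complex dimension $\dim B - \dim B'$. For any $p \in \calL_b$ the differential $H_*|_{T_p \calL} : T_p \calL \twoheadrightarrow T_b B'$ has kernel $K := T_p \calL_b \subset V_b M$. Using the identification $V_b M \cong T_b^\vee B$ recalled in the previous subsection, the formula $\Omega(v, w) = \alpha(H_* w)$, with $v \in V_b M$ corresponding to $\alpha \in T_b^\vee B$, combined with the Lagrangian condition $\Omega(T_p \calL, T_p \calL) = 0$ forces $\alpha|_{T_b B'} = 0$ for every $v \in K$. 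A dimension count $\dim K = \dim \calL - \dim B' = \dim B - \dim B' = \dim T_b B'^\perp$ then yields $K = T_b B'^\perp$; in particular $T_p \calL_b$ is the same constant subspace of $V_b M$ at every $p \in \calL_b$.

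The structural statement is now a leaf-tracking argument inside the compact torus. Each connected component $C$ of $\calL_b$ is a closed complex submanifold of $M_b$ whose tangent space at every point equals the constant subspace $T_b B'^\perp \subset V_b M$, so $C$ is contained in a single leaf $L = x + \exp(T_b B'^\perp)$ of the constant distribution on $M_b = V_b M / \Lambda_b$, and is an open subset of $L$ by the dimension match. But $C$ is also closed in $M_b$, hence closed in $L$; since $L$ is connected we obtain $C = L$. Thus $L$ is closed in $M_b$, which happens if and only if $T_b B'^\perp$ is rational with respect to $\Lambda_b$; dualizing, this is precisely rationality of $T_b B'$ with respect to the dual lattice $\Lambda_b^\vee \subset T_b B$ defining the integral affine structure. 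The image $T := \exp(T_b B'^\perp) \subset M_b$ is then a closed complex subtorus of the correct dimension, each component of $\calL_b$ is a coset of $T$, and compactness of $\calL_b$ forces the union to be finite, giving the required union of complex tori.

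The main obstacle I foresee is not a hard computation but the leaf-to-subtorus passage: one has to exploit closedness of $\calL_b$ in the compact $M_b$ to promote the tangent-space equality at a point into the global statement that the whole leaf $L$ is closed, which is what forces rationality. All other steps reduce to linear algebra in the symplectic pairing $V_b M \times T_b B \to \CC$ together with the fiber bundle hypothesis on $g$.
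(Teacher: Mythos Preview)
Your proof is correct and follows essentially the same line as the paper: compute the vertical tangent space of $\calL$ at $b$ via the Lagrangian condition and the symplectic identification $V_bM\cong T_b^\vee B$, identify it with $T_bB'^\perp$, and conclude that the components of $\calL_b$ are subtori, which forces rationality. The only difference is presentational: the paper cuts out $B'$ by local functions $f_1,\dots,f_k$ and packages $T_bB'^\perp$ as the span of the associated invariant vector fields, while you work directly with the annihilator; and you spell out the leaf-closure argument (open and closed in the leaf inside the compact torus) that the paper compresses into the single sentence ``Hence the connected components \dots\ are complex subtori.''
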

\begin{proof} Locally at a smooth point $b \in B'^\reg$ the subvariety $B' \subset B$ is cut out by $k=\codim B'$ many functions $f_1,\dots,f_k\in \calO_B$. We can associate invariant vector fields $X_i \in VM\rest_{B'}$ along the torus fibers so that $\Omega(X_i, \cdot)= H^* \mathrm{d} f_i$ for $i=1,\dots,k$. Let $m \in \calL_b$ and $Y \in T_mM$, such that $DH(Y) \in T_bB'$, then
 \begin{align} \Omega(X_i,Y)=H^*\mathrm{d}f_i Y=\mathrm{d}f_i(DH(Y)) = 0. \label{eq:horizontal_vf}
 \end{align}
 Therefore, the connected components of the fiber of $\calL$ over $b \in B'^\reg$ are integral submanifolds of the distribution $V\calL=\mathrm{span}(X_1, \dots, X_k)$. Hence the connected components of the fibers of $\calL_b \to B'^\reg$ are complex subtori. In particular, the subspace $V_b\calL \subset V_bM$ is rational with respect to the lattice $\Lambda_b$. By definition, this is equivalent to $H_*T\calL=TB'$ being rational with respect to the integral affine structure on $B$. Finally, by (\ref{eq:horizontal_vf}) we have $ V\calL \subset (TB'^\reg)^\perp \subset T^\vee B^\reg \cong VM$. The first inclusion is an equality both being of rank $k$.
\end{proof}

\section{Hitchin systems \label{sec:hitchin-systems}}
In this section, we will briefly review Hitchin systems - the algebraically completely integrable systems of interest in this work. Then we will give an example of a visible Lagrangian of $\calM_{\GL(n,\CC)}$ that stems from the embedding of $\SL(n,\CC) \subset \GL(n,\CC)$. In the remainder of the paper we will focus on visible Lagrangians that do not come from Lie theory.

\subsection[Preliminaries about the GL(n,C) and SL(n,C)-Hitchin systems]{Preliminaries about the $\boldsymbol{\GL(n,\CC)}$ and $\boldsymbol{\SL(n,\CC)}$-Hitchin systems}\label{sec:prel-Hitchin_syst}

Let $G$ be a complex reductive Lie group and $X$ a Riemann surface, then there is a moduli space of stable $G$-Higgs bundles. More precisely, we denote by $\calM_G(X)$ the neutral component of the moduli space of stable $G$-Higgs bundles. It is a hyperk\"ahler manifold, in particular, holomorphic symplectic. There is the Hitchin map $\Hit\colon \calM_G(X) \to \calB_G(X)$ to a half-dimensional vector space $\calB_G(X)$, which is an algebraically completely integrable system in the sense of Definition~\ref{defi:integrable_system}. So it is sensible to ask for the existence of visible Lagrangians.

Let us be more concrete about the cases $G=\GL(n,\CC)$ and $G=\SL(n,\CC)$. A $\GL(n,\CC)$-Higgs bundles is a pair $(\calE,\Phi)$ of a holomorphic vector bundle $\calE$ together with a section $\Phi \in H^0(X,\End(\calE)\otimes K_X)$, where $K_X$ is the canonical bundle of~$X$. Considering the neutral component of the $\GL(n,\C)$-moduli space means to fix the degree of $\calE$ to be $0$. For $(\calE,\Phi)$ to be a~$\SL(n,\CC)$-Higgs bundle, we add the condition of the determinant bundle of $\calE$ and trace of $\Phi$ to be trivial. In the $\GL(n,\CC)$-case, the Hitchin map is given by
\[ \Hit\colon \ \calM_{\GL(n,\CC)} \to \calB_{\GL(n,\CC)}= \bigoplus_{i=1}^n H^0\bigl(X,K^i\bigr), \qquad (\calE,\Phi) \mapsto \underline{a}(\Phi)=(a_1(\Phi),\dots,a_n(\Phi)),
\]
where $a_i \in \CC[\gl(n,\CC)]^{\GL(n,\CC)}$ is the $i$-th coefficient of the characteristic polynomial. In the $\SL(n,\CC)$-case, it is given by
\[ \Hit \colon \ \calM_{\SL(n,\CC)} \to \calB_{\SL(n,\CC)}= \bigoplus_{i=2}^n H^0\bigl(X,K^i\bigr), \qquad (\calE,\Phi) \mapsto \underline{a}(\Phi)=(a_2(\Phi),\dots,a_n(\Phi)).
\]
Fixing a point $\underline{a} \in \calB_G$ in the Hitchin base, the eigenvalues of $\Phi$ define a branched $n$-sheeted cover $\pi\colon \Sigma_{\underline{a}} \to X$ -- the so-called spectral curve. The discriminant of the characteristic polynomial defines a map \smash{$\disc_G\colon \calB_G \to H^0\bigl(X,K^{r(r-1)}\bigr)$}. The discriminant locus $\Delta_G \subset \calB_G$ is the preimage of the sections of \smash{$H^0\bigl(X,K^{r(r-1)}\bigr)$} with higher order zeros under $\disc_G$. Its complement $\calB_G^{\reg}=\calB_G \setminus \Delta_G$ is referred to as the regular locus. In particular, for $\underline{a} \in \calB^{\reg}_G$ the spectral curve~$\Sigma_{\underline{a}}$ is smooth. For $G=\SL(2,\CC)$, the regular locus \smash{$\calB_{\SL(2,\CC)}^{\reg}$} is the locus of quadratic differentials with simple zeros only.

The fibers of the $\GL(n,\CC)$-Hitchin map over \smash{$\calB_{\GL(n,\CC)}^{\reg}$} -- the so-called regular fibers -- are torsors over $\Jac(\Sigma_{\underline{a}})$ via the spectral correspondence. The branched cover $\pi\colon \Sigma_{\underline{a}} \to X$ defines a~norm map $\Nm_{\Sigma/X}\colon \Jac(\Sigma_{\underline{a}}) \to \Jac(X)$. The kernel of this morphism defines the Prym variety $\Prym(\Sigma_{\underline{a}})$. A regular fiber of the $\SL(n,\CC)$-Hitchin system is a torsor over $\Prym(\Sigma_{\underline{a}})$. The neutral component of the Higgs bundles moduli space allows for the existence of sections $s_H\colon \calB_G \to \calM_G$ of the Hitchin map -- the so-called Hitchin sections. For $\GL(n,\CC)$, they are given by
\[ s_H(\underline{a})=\left( K^{\frac{n-1}{2}} \oplus \dots \oplus K^{-\frac{n-1}{2}}, \begin{pmatrix} a_1 & a_2 & \dots & a_n \\ 1 & a_1 & \ddots & \vdots \\ & \ddots & \ddots & a_2 \\ &&1& a_1 \end{pmatrix} \right)
\] depending on a choice of a~square-root~$K^{\frac{1}{2}}$.

The tangent space to $\Jac(\Sigma_{\underline{a}})$ at the identity is $H^1(\Sigma_{\underline{a}},\calO_{\Sigma_{\underline{a}}})$ by the exponential sequence. The inclusion of the Hitchin fiber into $\calM_{\GL(n,\CC)}$ yields an exact sequence of tangent spaces
\[ 0 \to H^1(\Sigma_{\underline{a}},\calO_{\Sigma_{\underline{a}}}) \to T_{(\calE,\Phi)}\calM_{\GL(n,\CC)} \to H^0(\Sigma_{\underline{a}},K_{\Sigma_{\underline{a}}}) \to 0
\]
(see \cite[Proposition~8.2]{Markman}).
The holomorphic symplectic form identifies the vertical tangent vectors with the dual of the tangent space to the Hitchin base. Combining this with Serre duality yields the following identification of the tangent space to base with differentials on $\Sigma_{\underline{a}}$.

\begin{prop}[{\cite[Proposition~3.4]{Baraglia}}]\label{prop:Baraglia}
 Let \smash{$\underline{a} \in \calB_{\GL(n,\CC)}^\reg$}. The identification of the tangent space $T_{\underline{a}}\calB_{\GL(n,\CC)}$ with the dual of the tangent space to the fiber $T^\vee_L\Jac(\Sigma_{\underline{a}}) \cong H^0(\Sigma_{\underline{a}},K_\Sigma)$ is given by
 \begin{align*}
 t\colon \ \bigoplus\limits_{i=1}^n H^0\bigl(X,K_X^i\bigr) & \to H^0(\Sigma_{\underline{a}},K_{\Sigma_{\underline{a}}})
 \\
\sum\limits_{i=1}^n \alpha_i X_i & \mapsto \frac{1}{s_B} \sum\limits_{i=1}^n \alpha_i \pi^*X_i\bigl(\lambda^{n-i}+\pi^*a_2 \lambda^{n-2-i}+ \dots + \pi^*a_{n-i}\bigr),
 \end{align*}
 where $s_B=d \pi \in H^0\bigl(\Sigma_{\underline{a}},\pi^*K_X^{n-1}\bigr)$ vanishes at the branch divisor.
\end{prop}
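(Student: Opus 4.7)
The plan is to combine three ingredients already set up in the excerpt. First, Markman's tangent sequence recalled just above canonically identifies the vertical tangent space at a smooth Hitchin fiber with $H^1(\Sigma_{\underline{a}}, \calO_{\Sigma_{\underline{a}}})$. Second, since the Hitchin fibers are Lagrangian, the holomorphic symplectic form on $\calM_{\GL(n,\CC)}$ induces an isomorphism from this vertical tangent space to $T_{\underline{a}}^\vee \calB_{\GL(n,\CC)}$. Third, Serre duality on $\Sigma_{\underline{a}}$ gives $H^1(\Sigma_{\underline{a}},\calO_{\Sigma_{\underline{a}}})^\vee \cong H^0(\Sigma_{\underline{a}}, K_{\Sigma_{\underline{a}}})$. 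Composing the three produces the abstract isomorphism $t \colon T_{\underline{a}}\calB_{\GL(n,\CC)} \xrightarrow{\sim} H^0(\Sigma_{\underline{a}}, K_{\Sigma_{\underline{a}}})$; what remains is to identify $t$ with the stated formula.

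To pin $t$ down concretely, I would evaluate the symplectic pairing at a point $(E,\Phi)$ on the Hitchin section, where any $\sum_i\alpha_iX_i\in T_{\underline{a}}\calB_{\GL(n,\CC)}$ admits a preferred horizontal lift obtained by perturbing the companion-matrix Higgs field. The Atiyah--Bott formula pairs this horizontal vector with a vertical deformation $\dot A\in H^1(X,\End E)$ as $\int_X\tr(\dot A\cdot\dot\Phi_\alpha)$, while on the spectral side the vertical direction is realised via Beauville--Narasimhan--Ramanan: the spectral line bundle $L=\Ker(\lambda\cdot\id-\Phi)$ on $\Sigma_{\underline{a}}$ is generated by the columns of $\mathrm{adj}(\lambda\cdot\id-\Phi)$. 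The identity $(\lambda\cdot\id-\Phi)\cdot\mathrm{adj}(\lambda\cdot\id-\Phi)=P(\lambda)\cdot\id$ combined with Cayley--Hamilton expands the adjugate as
\[
\mathrm{adj}(\lambda\cdot\id-\Phi)=\sum_{i=1}^{n}\Phi^{i-1}Q_i(\lambda),
\]
where each $Q_i(\lambda)$ is a polynomial of degree $n-i$ in $\lambda$ whose coefficients are $1,\pi^*a_1,\pi^*a_2,\dots,\pi^*a_{n-i}$ -- matching (up to convention) the polynomial multiplying $\pi^*\alpha_i$ in the statement. The denominator $s_B$ arises from the adjunction chain $K_{\Sigma_{\underline{a}}}\cong N_{\Sigma_{\underline{a}}/\mathrm{Tot}(K_X)}\cong\pi^*K_X^n|_{\Sigma_{\underline{a}}}$ mediated by $P_\lambda$, which vanishes simply on the ramification divisor and therefore agrees with $s_B=d\pi$ up to a nonzero factor.

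The main obstacle is chaining these identifications rigorously. Concretely, one has to translate the Atiyah--Bott integral $\int_X\tr(\dot A\cdot\dot\Phi_\alpha)$ into the residue expression $\int_{\Sigma_{\underline{a}}}t(\alpha)\cup\dot L$ by replacing $\tr$ on $\End E$ with $\pi_*$ on the spectral side, expressing $\dot A\in H^1(X,\End E)$ via a \v{C}ech representative of $\dot L\in H^1(\Sigma_{\underline{a}},\calO_{\Sigma_{\underline{a}}})$ under BNR, and using the adjugate expansion to extract the polynomial factor $Q_i(\lambda)$. The $1/s_B$ then drops out of the comparison between the relative dualizing sheaf of $\pi$ and the adjunction isomorphism $K_{\Sigma_{\underline{a}}}\cong\pi^*K_X^n|_{\Sigma_{\underline{a}}}$.
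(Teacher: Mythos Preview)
Your proposal sketches a derivation of the explicit formula from first principles, via the Atiyah--Bott pairing, the BNR spectral correspondence, and the Faddeev--LeVerrier expansion of the adjugate. That outline is essentially correct and is indeed how Baraglia proves his Proposition~3.4. But it is not what the paper does here.

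The proposition is \emph{attributed} to Baraglia in its header, and the paper's proof takes this literally: Baraglia's original statement already provides the isomorphism, only with target $H^0(\Sigma_{\underline{a}},\pi^*K_X^n)$ rather than $H^0(\Sigma_{\underline{a}},K_{\Sigma_{\underline{a}}})$. The paper's entire argument is to exhibit the sheaf isomorphism $K_{\Sigma_{\underline{a}}}\xrightarrow{\ \cdot s_B\ }\pi^*K_X^n$, check in a local coordinate $w\mapsto z=w^b$ that multiplication by $s_B=d\pi$ really lands in $\pi^*K_X^n$ (using $\pi^*dz=bw^{b-1}dw$), and then compose Baraglia's map with the inverse of this isomorphism to obtain $t$. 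That is the whole proof.

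So the difference is one of scope. You are reproving Baraglia's result; the paper is only translating it by a twist by $s_B$. Your approach has the merit of being self-contained and explaining where the polynomial factors $\lambda^{n-i}+\pi^*a_2\lambda^{n-2-i}+\dots$ and the denominator $s_B$ actually come from (the adjugate expansion and the adjunction $K_\Sigma\cong\pi^*K_X^n$, respectively). The paper's approach is far shorter but treats the formula as a black box imported from the reference. If you intend to match the paper, the proof reduces to the one-line sheaf isomorphism $\phi\mapsto\phi\cdot s_B$ and a local check that it is well defined at branch points.
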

\begin{proof}
 The original statement gives the isomorphism with values in $H^0(\Sigma_{\underline{a}},\pi^*K_X^n)$. We have the isomorphism of sheaves $K_{\Sigma_{\underline{a}}} \to \pi^*K_X^n$ given by $ \phi \mapsto \phi s_B\rest_U$ for $\phi \in K_{\Sigma_{\underline{a}}}\rest_U$. This is well defined by the following: Let $(U,w)$ a coordinate disc centered at $y \in \Sigma$, such that $\pi\colon U \to \pi(U)$, $w \mapsto z=w^b$ with $b \geq 1$. Then $s_B\rest_U= b w^{b-1} (\pi^* \d z)^{n-1}$. Hence, $\phi \cdot s_B(U)=f \d w \cdot b w^{b-1} (\pi^* \d z)^{n-1}=f (\pi^* \d z)^{n}$, where we used that $\pi^* \d z= b w^{b-1} \d w$. Composing Baraglia's isomorphism \cite[Proposition~3.4]{Baraglia} with the inverse of the above defines the asserted isomorphism~$t$.\looseness=1
\end{proof}

In the following proposition, we take a algebro-geometric point of view and will consider the family of smooth curves $\Sigma \to \calB_G^\reg$. The $\GL(n,\CC)$-Hitchin system is a torsor over the abelian scheme defined by the relative Jacobian $\Jac\bigl(\Sigma/\calB_G^\reg\bigr)$.

\begin{prop}\label{prop:criterion} A closed subvariety $\calL \subset \calM_{\GL(n,\CC)}$ over a proper subvariety $\calB' \subsetneq \calB_{\GL(n,\CC)}$ with $\calB'^\reg \subset \calB^\reg$ and connected fibers is a visible Lagrangian if and only if
 \begin{itemize}\itemsep=0pt
 \item[$(i)$] There exists an abelian subscheme $A \subset \Jac\bigl(\Sigma/\calB_G^\reg\bigr) \rest_{B'^\reg}$ over $\calB'^\reg$, such that $\calL$ is an $A$-torsor.
 \item[$(ii)$] The relative tangent bundle $T_{A/\calB'^\reg} \subset T_{\Jac(\Sigma)/\calB'^\reg}$ is the kernel of the map
 \[ T_{\Jac(\Sigma)/\calB'^\reg} \cong R^1\pi_*\calO_\Sigma \to \CC^{\dim \calB'^\reg}
 \]
 defined by evaluating on the image of
 \[ T\calB'^\reg \to R^0\pi_*K_\Sigma, \qquad X \mapsto t(X)
 \]
 using the Serre pairing on $\Sigma$. Here $t$ was defined in Proposition~{\rm \ref{prop:Baraglia}}.
 \end{itemize}
\end{prop}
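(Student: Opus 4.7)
The plan is to translate the abstract characterization of visible Lagrangians in Theorem~\ref{theo:visible_Lagr} into the concrete algebro-geometric language of the Hitchin system, using Proposition~\ref{prop:Baraglia} and Serre duality on the spectral curve.

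As a preliminary step, I would match two identifications of $V_b M$. On the one hand, the symplectic form gives $V_b M \cong T^\vee_b \calB^\reg$ via $X \mapsto \omega(X,\cdot) \circ \Hit^*$, as described after Definition~\ref{defi:integrable_system}. On the other, the spectral correspondence yields $V_b M \cong H^1(\Sigma_{\underline{a}}, \calO_{\Sigma_{\underline{a}}})$, Serre-dual to $H^0(\Sigma_{\underline{a}}, K_{\Sigma_{\underline{a}}})$. My claim, to be recorded as a small lemma, is that under these identifications Baraglia's map $t$ of Proposition~\ref{prop:Baraglia} is exactly the transpose of the symplectic identification: for $X \in H^1(\Sigma_{\underline{a}}, \calO_{\Sigma_{\underline{a}}})$ and $\alpha \in T_b \calB^\reg$, the Serre pairing of $X$ against $t(\alpha)$ equals the value on $\alpha$ of the cotangent vector assigned to $X$. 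This is essentially the way Baraglia derives $t$ from the symplectic form, so the verification is bookkeeping.

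For the forward direction, assume $\calL$ is visible. Theorem~\ref{theo:visible_Lagr} says that for each $b \in \calB'^\reg$ the fiber $\calL_b$ is a union of complex subtori of the Jacobian-torsor fiber whose tangent space is $(T_b\calB'^\reg)^\perp \subset V_b M$. The connected-fiber hypothesis reduces this to a single subtorus coset, and as $b$ varies these fit into an abelian subscheme $A \subset \Jac(\Sigma/\calB_G^\reg)\rest_{\calB'^\reg}$ of the relative Jacobian with $\calL\rest_{\calB'^\reg}$ an $A$-torsor; this is condition (i). Transporting the tangent-space equality $T_{A/\calB'^\reg,b} = (T_b \calB'^\reg)^\perp$ through the preliminary lemma turns it into condition (ii) verbatim.

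For the converse, (i) immediately provides the smooth fiber bundle structure of $\calL\rest_{\calB'^\reg} \to \calB'^\reg$. To see that $\calL$ is Lagrangian, I would pass to local coordinates on a sufficiently small open $U \subset \calB'^\reg$: the holomorphic Liouville--Arnold theorem (cf.\ \cite{Freed}) identifies $\Hit^{-1}(U)$ symplectically with $T^*U/\Lambda$ equipped with the canonical symplectic form. Under this identification, and via the preliminary lemma, condition (ii) exhibits $\calL\rest_U$ as the image of the conormal variety $N^*(\calB'^\reg \cap U) \subset T^*U$, which is a standard example of a Lagrangian; in particular the dimension count $\dim \calL = \dim \calB = \tfrac12 \dim \calM_{\GL(n,\CC)}$ is automatic. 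Isotropy is pointwise, so it globalizes. The main obstacle is the compatibility lemma in the preliminary step; once it is in place, both directions reduce to Theorem~\ref{theo:visible_Lagr} together with the standard fact that conormal bundles are Lagrangian.
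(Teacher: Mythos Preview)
Your proposal is correct and follows essentially the same route as the paper: both directions are reduced to Theorem~\ref{theo:visible_Lagr} together with the identification of the symplectic pairing with the Serre pairing via Proposition~\ref{prop:Baraglia}. The paper's proof is terser---for the converse it simply asserts that condition~(ii) is equivalent to the vanishing of the restricted symplectic form, whereas you unpack this via action--angle coordinates and the conormal-variety description---but the underlying argument is the same, and your ``preliminary lemma'' is exactly what the paper records in the paragraph preceding the proposition.
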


\begin{proof} By Theorem~\ref{theo:visible_Lagr}, fibers of $\calL \to \calB'^\reg$ are complex subtori in the Hitchin fiber. Hence, using a local section $s\colon U \to \calL$ on an open $U \subset \calB'^\reg$, we can identify the fibers with an abelian subscheme of $A \to \Jac(\Sigma/\calB'^\reg)$. Condition (ii) is the family version of Theorem~\ref{theo:visible_Lagr} reformulated by using the observations about the holomorphic symplectic form on $\calM_{\GL(n,\CC)}$ in the previous paragraph. Conversely, condition (ii) is equivalent to the restriction of the symplectic form of~$\calM_{\GL(n,\CC)}$ to be zero on the tangent bundle to $\calL$. Hence, $\calL$ is a Lagrangian.
\end{proof}

In general, a visible Lagrangian might not have connected fibers. We will give an example in Theorem~\ref{theo:Lagr_SL_in_GL_multisection}.

To obtain an analogous statement in the $\SL(n,\CC)$-case, we have to identify the image of \smash{$T\calB^{\reg}_{\SL(n,\CC)} \subset T\calB^{\reg}_{\GL(n,\CC)}$} through the isomorphism $t$ of Proposition~\ref{prop:Baraglia}. The pullback
\[ \pi^*\colon \ H^0(X,K_X) \rightarrow H^0({\Sigma_{\underline{a}}},K_{\Sigma_{\underline{a}}})
\]
defines an inclusion of the differentials on $X$ into the differentials on~${\Sigma_{\underline{a}}}$. We define the linear map
\[ \mathrm{pr}_X\colon \ H^0({\Sigma_{\underline{a}}},K_{\Sigma_{\underline{a}}}) \to H^0(X,K_X), \qquad \lambda \mapsto \eta,
\]
where $\eta$ is define as follows: Let $U \subset X$ be a trivially covered open set, i.e., $\pi^{-1}(U)=\bigsqcup_{i=1}^n U_i$. Define $\eta(U)= \frac1n \sum_{i=1}^n \lambda(U_i)$. Then $\eta$ extends to an abelian differential on $X$ by the Riemann extension theorem. Clearly, $\mathrm{pr}_X \circ \pi^*=\id$. Denote by $H^0({\Sigma_{\underline{a}}},K_{\Sigma_{\underline{a}}})^-$ the kernel of $\mathrm{pr}_X$. Then this induces a splitting $H^0({\Sigma_{\underline{a}}},K_{\Sigma_{\underline{a}}})=H^0(X,K_X) \oplus H^0({\Sigma_{\underline{a}}}, K_{\Sigma_{\underline{a}}})^-$.

The Prym variety is the kernel of the Norm map $\Nm_{\Sigma/X}\colon \Jac(\Sigma_{\underline{a}}) \to \Jac(X)$ induced by the Norm map on structure sheaves $\Nm_{\Sigma/X}\colon \pi_*\calO_{\Sigma_{\underline{a}}} \to \calO_X$, $\pi_*f \to \det(\pi_*f)$. Tangentially, we have a splitting of $\pi_*\calO_{\Sigma_{\underline{a}}}= \calO_X \oplus \pi_*\calO_{\Sigma_{\underline{a}}}^-$, where $\pi_*\calO_{\Sigma_{\underline{a}}}^-$ is the kernel of $\mathrm{nm}_{\Sigma/X}\colon \pi_* \calO_{\Sigma_{\underline{a}}} \to \calO_X, \pi_*f \mapsto \tr(\pi_*f)$. This yields a splitting of the tangent space to the $\GL(n,\CC)$-Hitchin fiber over $\underline{a}$
\begin{align*} T\Jac({\Sigma_{\underline{a}}})&=H^1({\Sigma_{\underline{a}}},\calO_{\Sigma_{\underline{a}}}) = H^1(X,\calO_X) \oplus H^1(X,\pi_*\calO_{\Sigma_{\underline{a}}}^-)
 = T\pi^*\Jac(X) \oplus T\Prym(\Sigma_{\underline{a}}).
\end{align*}
Let $\alpha \in H^0(X,K_X)$ and $\beta \in H^1({\Sigma_{\underline{a}}},\calO_{\Sigma_{\underline{a}}}) \cong H^{(0,1)}({\Sigma_{\underline{a}}})$, then
\[ \int_{\Sigma_{\underline{a}}} \pi^*\alpha \wedge \beta = \int_X \alpha \wedge \mathrm{nm}_{\Sigma/X} (\beta).
 \] Therefore, the Serre pairing on ${\Sigma_{\underline{a}}}$ restricts to a non-degenerate pairing between $H^0({\Sigma_{\underline{a}}},K_{\Sigma_{\underline{a}}})^-$ and $H^1({\Sigma_{\underline{a}}}, \calO_{{\Sigma_{\underline{a}}}})^-$. Consequently, the isomorphism $t$ of Proposition~\ref{prop:Baraglia} restricts to
\[ t\colon \ \bigoplus\limits_{i=2}^n H^0(X,K_X^i) \to H^0(\Sigma_{\underline{a}},K_{\Sigma_{\underline{a}}})^-
\] by the same formula.
Now Proposition~\ref{prop:criterion} readily translates to a characterization of visible Lagrangians in $\calM_{\SL(n,\CC)}$.

\subsection{Langlands duality for Hitchin systems}
In this section, we will review the Langlands duality of Hitchin system as considered in~\cite{DonagiPantev}. We will fix $G=\SL(n,\CC)$ and $G^{\rm L}=\PGL(n,\CC)$ (cf.\ Remark~\ref{rema:GL}). Recall that we consider the neutral components of the moduli spaces. The situation is visualized in Figure~\ref{fig:Langlands}. First there is an isomorphism of the Hitchin bases for $G$ and $G^{\rm L}$, which maps the $G$-discriminant locus to the $G^{\rm L}$-discriminant locus. In the cases under consideration, this isomorphism is the identity. The $\SL(n,\CC)$-Hitchin system restricted to \smash{$\calB_{\SL(n,\CC)}^\reg$} is a torsor over the abelian scheme \smash{$\Prym(\Sigma/\calB^{\reg}_{\SL(n,\CC)})$}. On the other hand, the (neutral component of the) moduli space of $\PSL(n,\CC)$-Higgs bundles is the quotient
\[ \calM_{\PSL(n,\CC)} = \calM_{\SL(n,\CC)}/\Jac(X)[n]
\] under the action of $\Jac(X)[n]$ on the moduli spaces of $\SL(n,\CC)$-Higgs bundles by tensor product. The $\PGL(n,\CC)$-Hitchin system restricted to \smash{$\calB^\reg_{\PSL(n,\CC)}$} is a torsor for the dual abelian scheme
\[
\Prym\bigl(\Sigma/\calB^{\reg}_{\SL(n,\CC)}\bigr)^\vee=\Prym\bigl(\Sigma/\calB^{\reg}_{\SL(n,\CC)}\bigr)/\pi^*\Jac(X)[n].
\] In the case of $\SL(n,\CC)$ and $\PGL(n,\CC)$, the quotient map $\delta\colon \calM_G \to \calM_{G^{\rm L}}=\calM_G/\Jac(X)[n]$ extends the polarization to a finite morphism between the moduli spaces. This morphism is holomorphic symplectic, i.e., it induces an symplectic isomorphism of tangent spaces at the points with non-trivial stabilizer.
\begin{figure}[t]
 \centering
 \begin{tikzcd}
 \calM_G \ar[rd]\ar[rr] & & \calM^0_{G^{\rm L}}
 \ar[ld] \\
 & \calB_G=\calB_{G^{\rm L}} \ar[lu,bend left=20,"s_H" below]\ar[ru,bend right=20,"s_H" below] & \\
\Prym\bigl(\Sigma/\calB^{\reg}_{G}\bigr) \ar[rd]\ar[uu,bend left=50] & & \Prym\bigl(\Sigma/\calB^{\reg}_{G}\bigr)/\pi^*\Jac(X)[n] \ar[ld] \ar[uu,bend right=40] \\
 & \calB_G^\reg = \calB_{G^{\rm L}}^\reg \ar[uu,hook] \ar[lu,bend left=15,"e" above]\ar[ru,bend right=15,"e" above] &
 \end{tikzcd}

 \vspace{-2mm}

 \caption{Langlands duality between $\SL(n,\CC)$-Hitchin system and $\PGL(n,\CC)$-Hitchin system.}\label{fig:Langlands}
\end{figure}

\begin{theo}\label{theo:subintegrable_syst} Let $\calL \to \calB'\subset \calB_{G}$ be a visible Lagrangian in $\calM_G$ with $\calB'^\reg \subset \calB^\reg$, connected fibers and a section $s\colon \calB'^\reg \to \calL\rest_{\calB'^\reg}$. Then there exists a holomorphic symplectic subvariety $\calI \subset \calM_{G^{\rm L}}$, such that $\calI \to \calB'\subset \calB_{G^{\rm L}}$ is an algebraically completely integrable system and $s'=\delta \circ s$ defines a section~$s'\colon \calB'^\reg \to \calI\rest_{\calB'^\reg}$.
\end{theo}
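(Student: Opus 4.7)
The plan is to realize $\calI$ as the family of translates of the section $s' = \delta \circ s$ by an abelian subscheme $A^\perp \subset \Prym(\Sigma/\calB'^\reg)^\vee$ that serves as the orthogonal complement to the abelian subscheme $A \subset \Prym(\Sigma/\calB'^\reg)$ whose torsors form the fibers of $\calL \to \calB'^\reg$. By the $\SL(n,\CC)$-analogue of Proposition~\ref{prop:criterion}, the choice of section $s$ identifies $\calL|_{\calB'^\reg}$ with the $A$-torsor whose relative tangent satisfies $T_0 A_b = V_b^\perp$, where $V_b := t(T_b\calB'^\reg) \subset H^0(\Sigma_b, K_{\Sigma_b})^-$ and the perpendicular is under Serre duality. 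I define $A^\perp \subset \Prym(\Sigma/\calB'^\reg)^\vee$ fiberwise as the identity component of the kernel of the restriction map $\Prym^\vee_b \to A_b^\vee$; equivalently, $A^\perp_b$ is the image of $(\Prym_b/A_b)^\vee$ inside $\Prym^\vee_b$ via the dual of the quotient $\Prym_b \twoheadrightarrow \Prym_b/A_b$. This abelian subvariety has dimension $\dim\Prym_b - \dim A_b = \dim\calB'$ and tangent space canonically $T_0 A^\perp_b \cong (T_0 \Prym_b / V_b^\perp)^\vee \cong V_b$ by Serre duality. I then let $\calI$ be the closure in $\calM_{G^L}|_{\calB'}$ of the image of the fiberwise translation map $A^\perp \to \calM_{G^L}|_{\calB'^\reg}$, $a_b \mapsto s'(b)+a_b$.

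By construction $s'$ is a section of $\calI|_{\calB'^\reg} \to \calB'^\reg$, and the fibers of $\calI$ over $\calB'^\reg$ are $A^\perp_b$-torsors, hence abelian varieties of dimension $\dim\calB'$ inheriting continuously varying polarizations from $\Prym^\vee_b$. Since $\dim\calI = 2\dim\calB'$ and the Hitchin map is proper, $\calI \to \calB'$ is proper. The fiber tangents $T_0 A^\perp_b$ lie inside the Lagrangian Hitchin fiber tangents of $\calM_{G^L}$, so are isotropic; by a dimension count they are Lagrangian in $\calI$ once $\calI$ itself is shown to be symplectic.

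The crucial step is non-degeneracy of $\Omega|_\calI$. At $m = s'(b)$ for $b\in\calB'^\reg$, the section induces the splitting $T_m\calI = Ds'(T_b\calB'^\reg) \oplus T_0 A^\perp_b$. The restriction of $\Omega$ to the second factor vanishes, while the cross-pairing $Ds'(T_b\calB'^\reg) \times T_0 A^\perp_b \to \CC$ descends under $D\Hit$ to the canonical symplectic duality between $T_b\calB_{G^L}$ and the vertical tangent $T_0 \Prym^\vee_b$; under the identifications $T_b\calB'^\reg \cong V_b$ (via $t$) and $T_0 A^\perp_b \cong V_b$ (via the construction of $A^\perp$), this is the perfect Serre pairing, hence non-degenerate. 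The symplectic matrix on $T_m\calI$ in this block decomposition has shape $\bigl(\begin{smallmatrix} s'^*\Omega & B \\ -B^\top & 0 \end{smallmatrix}\bigr)$ with $B$ invertible, and is therefore invertible regardless of the upper-left block. Fiberwise translation by $A^\perp$ preserves $\Omega$ (it is translation in angle coordinates of the integrable system $\calM_{G^L} \to \calB_{G^L}$), so non-degeneracy propagates along each $A^\perp$-orbit and hence to all smooth points of $\calI|_{\calB'^\reg}$. The main obstacle is to carry out this tangent-space comparison precisely: matching the canonical identification $T_0 A^\perp_b \cong V_b$ with the pairings inherited from Serre duality on $\Sigma_b$, the Langlands polarization isogeny $\Prym \to \Prym^\vee$, and the symplectic character of the quotient map $\delta$.
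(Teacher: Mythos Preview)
Your proposal is correct and takes essentially the same approach as the paper: your $A^\perp$ is exactly the paper's $Q^\vee$ arising from the dual of the short exact sequence $0 \to A \to \Prym(\Sigma/\calB'^\reg) \to Q \to 0$, and both define $\calI$ as the closure of the $Q^\vee$-orbit of $s' = \delta \circ s$, then verify non-degeneracy at $s'(b)$ via the block decomposition and propagate by fiberwise translation. For the non-degeneracy step the paper sidesteps the Serre-duality bookkeeping you flag as ``the main obstacle'': it simply notes that $H'_b := Ds'(T_b\calB'^\reg)$ pairs to zero with $V_b\calL$ (since $\calL$ is Lagrangian and $\delta$ is a symplectic isomorphism on tangent spaces), whence non-degeneracy of $\Omega$ on the ambient tangent space together with the complementarity $V_b\calL \oplus V_b\calI = $ (vertical) forces the cross-pairing $H'_b \times V_b\calI \to \CC$ to be perfect --- no explicit comparison with the Serre pairing or the polarization isogeny is needed.
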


\begin{proof} Using the section $s$, we can identify the Hitchin system over $\calB'^\reg$ with the abelian scheme $\Prym(\Sigma/\calB'^\reg) \to \calB'^\reg$. By Theorem~\ref{theo:visible_Lagr}, the fiber $\calL_b$ for $b \in \calB'^\reg$ is a complex subtorus that by assumption contains $s(b)$. Hence, $\calL$ defines an abelian subscheme $A \subset \Prym(\Sigma/\calB'^\reg)$. We obtain an exact sequence of abelian schemes over $\calB'^\reg$
 \begin{align} 0 \to A \to \Prym(\Sigma/\calB'^\reg) \to Q \to 0, \label{eq:exact_sequ}
 \end{align}
 where the quotient $Q$ is again an abelian scheme over $\calB'^\reg$. Dually, we obtain an exact sequence of abelian schemes
 \begin{align} 0 \to Q^\vee \to \Prym(\Sigma/\calB'^\reg)^\vee \to A^\vee \to 0. \label{eq:dual_exact_sequ}
 \end{align}
 We can define a section \smash{$s'= \delta \circ s\colon \calB'^\reg \to \Hit^{-1}_{G^{\rm L}}(\calB'^\reg) \subset \calM_{G^{\rm L}}$}. Acting by $Q^\vee$ on $s'\colon \calB'^\reg \to \calM_{G^{\rm L}}$ defines a submanifold \smash{$I \subset \Hit^{-1}(\calB'^\reg) \subset \calM_{G^{\rm L}}$}. We define $\calI=\overline{I}$. We want to show that this is subintegrable system with regular locus $I$, i.e., the holomorphic symplectic form restricts to a non-degenerate form on~$I$ and $\Hit\rest_{\calI}: \calI \to \calB'^\reg$ is an integrable system. First note that for all $b \in \calB'^\reg$
 \[\dim \calI_b=\dim(\Prym(\Sigma_b))-\dim \calL_b=\frac 12 \dim \calM(X,G)-\dim \calL_b=\dim B'^\reg.
 \]
 Hence, the restricted Hitchin map will lead the correct number of commuting Hamiltonian functions and the fibers are complex tori by definition. To show that it forms a subintegrable system, it suffices to show that the tangent space of $\calI$ at the section~$s'$ is a~symplectic vector space with the restriction of the symplectic form. The argument at a~general point follows by a~translation along the fibers of~$\calI$. We identify the tangent spaces $T_{s(b)}\calM_G \cong T_{s'(b)}\calM_{G^{\rm L}}$ using~$\delta$. Then the vertical tangent spaces to $V\calL$ and $V\calI$ are complementary by definition. Denote by~$H'_b=Ds'(T_b\calB'^\reg)$ the tangent space to the image of the section~$s'$ at~$b$. Tangent vectors in~$H'_b$~pair to zero with the vertical tangent vectors in $V_b\calL$ as $\calL$ is Lagrangian. On~the other hand, the symplectic form on $T_{s'(b)}\calM_{G^{\rm L}}$ is non-degenerate. Hence, it induces an isomorphism $(H'_b)^\vee \cong V_b\calI$ or equivalently the symplectic form restricts to a non-degenerate symplectic form on $T_{s'(b)} \calI=H'_b \oplus V_b \calI$.
\end{proof}

\begin{theo}\label{theo:FM} Let $\calL \to \calB'\subset \calB_{G}$ be a visible Lagrangian in $\calM_G$ with $\calB'^\reg \subset \calB^\reg$, connected fibers and a section $s\colon \calB'^\reg \to \calL$. We identify the $G$- $($resp.\ $G^{\rm L}$-$)$ Hitchin system over $\calB'^\reg$ with the abelian schemes using the section~$s$ $($resp.\ $s'=\delta \circ s)$. Then the fiber-wise Fourier--Mukai transform of the structure sheaf $\calO_{\calL}$ over $\calB'^\reg$ is the structure sheaf of the holomorphic symplectic subvariety $\calI \subset \calM_{G^{\rm L}}$ defined in Theorem~{\rm \ref{theo:subintegrable_syst}}.
\end{theo}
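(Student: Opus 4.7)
The plan is to reduce the statement, fiber by fiber over $\calB'^\reg$, to the classical Fourier--Mukai computation of the structure sheaf of an abelian subvariety due to Mukai.

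First, I would use the sections $s$ and $s' = \delta \circ s$ to trivialize the $G$- and $G^L$-Hitchin systems over $\calB'^\reg$ as the abelian schemes $\Prym(\Sigma/\calB'^\reg)$ and $\Prym(\Sigma/\calB'^\reg)^\vee$, so that the fiber-wise Fourier--Mukai transform is induced by the relative Poincar\'e bundle $\calP$ on the fiber product $\Prym(\Sigma/\calB'^\reg) \times_{\calB'^\reg} \Prym(\Sigma/\calB'^\reg)^\vee$. Under these identifications, the argument in the proof of Theorem~\ref{theo:subintegrable_syst} shows that $\calL$ corresponds to the abelian subscheme $A \hookrightarrow \Prym(\Sigma/\calB'^\reg)$ from the exact sequence~\eqref{eq:exact_sequ}, and $\calI$ corresponds to the abelian subscheme $Q^\vee \hookrightarrow \Prym(\Sigma/\calB'^\reg)^\vee$ from the dual sequence~\eqref{eq:dual_exact_sequ}.

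The assertion then reduces fiber-wise to the following classical statement. For an inclusion $\iota\colon A \hookrightarrow B$ of complex abelian varieties with quotient $q\colon B \to Q$ and dual inclusion $\iota^\vee\colon Q^\vee \hookrightarrow B^\vee$, the Fourier--Mukai transform $\Phi_B$ associated to the Poincar\'e bundle $\calP_B$ on $B \times B^\vee$ sends $\iota_*\calO_A$ to $\iota^\vee_*\calO_{Q^\vee}$. The proof uses that $\calP_B|_{A \times B^\vee}$ is the pullback of the Poincar\'e bundle $\calP_A$ on $A \times A^\vee$ along $\id_A \times q^\vee$, combined with Mukai's cohomological vanishing $R\Gamma(A, \calP_A|_{A \times \xi}) = 0$ for $\xi \neq 0$ and cohomology and base change. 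This forces the Fourier--Mukai transform to be supported on $Q^\vee = \ker(B^\vee \to A^\vee)$, over which the restricted Poincar\'e bundle is trivial and the pushforward identifies with $\calO_{Q^\vee}$. These fiber-wise isomorphisms then assemble into an isomorphism over $\calB'^\reg$ by compatibility of the relative Poincar\'e bundle with base change and flatness of $A$ and $Q^\vee$ over $\calB'^\reg$.

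The main obstacle I expect concerns tracking origins and torsor structures carefully. One must verify that the section $s \in \calL$ and its descendant $s' \in \calI$ identify $\calL$ and $\calI$ with $A$ and $Q^\vee$ as abelian subschemes with canonical origin, rather than as torsors twisted by flat line bundles pulled back from $\calB'^\reg$; otherwise the Fourier--Mukai transform would acquire a non-trivial translation-type twist on $\calI$. The hypothesis that $s$ factors through $\calL$ is precisely what pins down the origin of each fiber $\calL_b$ and rules out such a twist. A secondary subtlety is the cohomological shift by $\dim A_b$ inherent in the Mukai transform, which is absorbed by the standard normalization of $\Phi$.
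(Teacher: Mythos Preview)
Your proposal is correct and arrives at the same fiber-wise identity $\Phi_P(\iota_*\calO_A)\cong \calO_{Q^\vee}$ (up to shift) that the paper needs, but the route differs from the paper's. The paper works with Schnell's \emph{symmetric} Fourier--Mukai transform $\SFM_P=\FM_{\calP}\circ\Delta_P$ and the GV-sheaf machinery of Pareschi--Popa: it first verifies that $\iota_*\calO_A$ is a GV-sheaf on $P$ by computing the cohomological support loci $\{\xi\in P^\vee : H^i(A,\calP_\xi^{-1}|_A)\neq 0\}$, deduces from \cite[Prop.~3.2]{PareschiPopa} that $\iota_*\calO_A$ is WIT, and then applies Schnell's functoriality statement \cite[Prop.~1.6]{Schnell}, which gives $\SFM_P(\iota_*\calF)=(\iota^\vee)^*\SFM_A(\calF)$ for GV sheaves; plugging in $\SFM_A(\calO_A)=\CC_0$ yields $\calO_{Q^\vee}$ with no residual shift.

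Your argument instead unpacks this black box directly: you use the identification $\calP_B|_{A\times B^\vee}\cong(\id_A\times q^\vee)^*\calP_A$, flat base change along $q^\vee$, and Mukai's computation $Rp_{2*}\calP_A\cong \CC_0[-\dim A]$ to conclude. This is essentially the content of Schnell's Proposition~1.6 specialized to $\calF=\calO_A$, so the two arguments are morally the same; yours is more self-contained and avoids introducing the GV formalism, while the paper's choice of the symmetric transform cleanly absorbs the shift $[-\dim A]$ that you flag at the end. Your remarks on origins and torsor structures are well taken and match how the paper uses the section $s$; the paper does not spell out the relative-over-$\calB'^\reg$ assembly you mention, working purely fiber by fiber.
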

\begin{proof}
 Let $\underline{a} \in \calB'^\reg$ and denote by $P=\Prym(\Sigma_{\underline{a}})$. As in the previous proof we use the section $s$ to obtain the exact sequences of abelian varieties
 \[ 0 \to A \to P \to Q \to 0 \quad (\ref{eq:exact_sequ}) \qquad \and \qquad 0 \to Q^\vee \to P^\vee \to A^\vee \to 0 \quad (\ref{eq:dual_exact_sequ}).
 \]
 We use the symmetric Fourier--Mukai transform introduced in~\cite{Schnell}. More precisely, we are going to use Proposition~1.6 therein. We denote by $\calP \to P \times P^\vee$ the Poincar\'e bundle. Then the symmetric Fourier--Mukai transform is defined by
 \[ \SFM_P\colon \ D^b(P) \to D^b\bigl(P^\vee\bigr), \qquad \FM_{\calP} \circ \Delta_P,
 \]
 where $\Delta_P=\Hom(\cdot,\omega_P[\dim P])$ is the Serre duality functor. First we have to show that $\calO_A$ is a GV-sheaf. By \cite[Definition~3.1]{PareschiPopa}, this means that the support loci
 \[ \bigl\{ \xi \in P^\vee \mid H^i\bigl(P,\iota_*\calO_A \otimes \calP_\xi^{-1}\bigr)\neq 0 \bigr\}
 \]
 have codimension $\geq i$ for all $i$. Let $\xi \in P^\vee$. We have
 \[ H^i\bigl(P,\iota_*\calO_A \otimes \calP_\xi^{-1}\bigr)=H^i\bigl(A,\calP_\xi^{-1}\rest_{A}\bigr).
 \]
 It is zero for $i > \dim A$. For $i \leq \dim A$, it is non-zero if and only if $\calP_\xi^{-1} \rest_A$ is trivial. Hence, if and only if $\xi \in Q^\vee$. Therefore, the support loci satisfy the dimension condition and $\iota_*\calO_A$ is a GV sheaf on $P$. In particular, it is WIT -- that is $\SFM_P(\calO_A)$ is a sheaf -- by \cite[Proposition~3.2]{PareschiPopa}.

 The symmetric Fourier--Mukai transform on $A$ has the property
 \begin{align} \label{eq:FM_of_point} \SFM_A(\calO_A)=\CC_0, \qquad \SFM_{A^\vee}(\CC_0) = \calO_{A},
 \end{align}
 where $\CC_0$ is the skyscraper sheaf of length $1$ at $0 \in P^\vee$. Now \cite[Proposition~1.6]{Schnell} states that if a sheaf $\calF$ is GV on $A$ and $\iota_*\calF$ is GV on $P$, then
 \[ \SFM_P(\iota_*\calF)=\bigl(\iota^\vee\bigr)^*\SFM_A(\calF).
 \]
 By~(\ref{eq:FM_of_point}), this yields $\SFM_P(\calO_A)=\calO_{Q^\vee}$.
\end{proof}

\begin{rema}\label{rema:GL} The above arguments similarly work for the case of $G=G^{\rm L}=\GL(n,\CC)$. Here the Hitchin system restricted to \smash{$\calB_{\GL(n,\CC)}^{\reg}$} is a torsor over the abelian scheme \smash{$\Jac(\Sigma) \to \calB_{\GL(n,\CC)}^{\reg}$}, which is self-dual due to the principal polarization of the Jacobians. Hence, again given a visible Lagrangian with connected fibers together with a section we obtain a holomorphic symplectic submanifold $\calI$ by the arguments of the proof of Theorem~\ref{theo:subintegrable_syst}. Furthermore, the arguments in the proof of Theorem~\ref{theo:FM} work for the structure sheaf of any abelian subvariety of an abelian variety.
\end{rema}

\subsection{Visible Lagrangians in Hitchin systems}\label{sec:visi_Lagra_Hitchin}
In this subsection, we will explain two examples of visible Lagrangians in Hitchin systems. These examples are independent of the choice of the Riemann surface. The first one is associated to the embedding of the complex Lie groups $\SL(n,\CC) \subset \GL(n,\CC)$. The second example appeared in the work of \cite{FGOP} and is a subvariety of the singular fibers of the Hitchin map. The remainder of the paper will deal with visible Lagrangians defined on special Riemann surfaces.

\textit{Visible Lagrangian associated to the subgroup $\SL(n,\CC) \subset \GL(n,\CC)$:}
Consider the embedding $\calB_{\SL(n,\CC)} \subset \calB_{\GL(n,\CC)}$. In this subsection, we will define a visible Lagrangian over $\calB'=\calB_{\SL(n,\CC)}$.

\begin{theo}\label{theo:Lagr_SL_in_GL} Let $\calB'=\calB_{\SL(n,\CC)}(X) \subset \calB_{\GL(n,\CC)}$ with \smash{$\calB'^\reg=\calB^{\reg}_{\SL(n,\CC)}$}. Then we can act by the trivial torsor $\Jac(X) \times \calB'$ on the Hitchin section by tensoring the underlying bundle. The orbit defines a visible Lagrangian $\calL \to \calB'$. The fiber-wise Fourier--Mukai transform of $\calO_{\calL}$ over $\calB'^\reg$ is supported on the moduli space of~$\SL(n,\CC)$-Higgs bundles $\calM_{\SL(n,\CC)} \subset \calM_{\GL(n,\CC)}$.
\end{theo}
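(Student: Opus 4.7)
The plan is to verify the two conditions of Proposition~\ref{prop:criterion} for $\calL$ and then apply the $\GL(n,\CC)$ analogue of Theorem~\ref{theo:FM} recorded in Remark~\ref{rema:GL}. The construction already supplies a section (the Hitchin section), so the remaining tasks are the torsor and tangent-space structure of $\calL$, and the identification of the Fourier--Mukai transform.

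First I would observe that under the spectral correspondence, tensoring the underlying bundle of a Higgs pair by $L\in\Jac(X)$ corresponds to tensoring the spectral line bundle on $\Sigma_{\underline{a}}$ by $\pi^*L$. Hence over $\calB'^\reg=\calB_{\SL(n,\CC)}^\reg$ the subvariety $\calL$ is a torsor for the abelian subscheme $A\subset \Jac(\Sigma/\calB_G^\reg)\rest_{\calB'^\reg}$ with fibers $A_{\underline{a}}=\pi^*\Jac(X)\subset \Jac(\Sigma_{\underline{a}})$, and in particular has connected fibers. This gives condition (i) of Proposition~\ref{prop:criterion}.

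For condition (ii) I would use the description from Section~\ref{sec:prel-Hitchin_syst}: the subspace $H^0(\Sigma_{\underline{a}},K_{\Sigma_{\underline{a}}})^-=\ker(\mathrm{pr}_X)$ is precisely the annihilator, under the Serre pairing on $\Sigma_{\underline{a}}$, of $\pi^*H^1(X,\calO_X)=T_0(\pi^*\Jac(X))$. Since $\dim T\calB_{\SL(n,\CC)}=\dim H^0(\Sigma_{\underline{a}},K_{\Sigma_{\underline{a}}})^-$ (both equal $\dim\Prym(\Sigma_{\underline{a}})$), it suffices to establish the inclusion $t(T\calB_{\SL(n,\CC)})\subset H^0(\Sigma_{\underline{a}},K_{\Sigma_{\underline{a}}})^-$, i.e.\ $\mathrm{pr}_X(t(\eta_i))=0$ for every $\eta_i\in H^0(X,K_X^i)$ with $i\geq 2$. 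Using $s_B=p'(\lambda)$, where $p(\lambda)$ is the characteristic polynomial, and the sheet-wise expression of Proposition~\ref{prop:Baraglia}, this reduces at an unramified point to the classical residue identity
\[
\sum_{j=1}^n \frac{f(\lambda_j)}{p'(\lambda_j)}=[\lambda^{n-1}]\,f(\lambda)
\]
(valid for any polynomial $f$ of degree $\leq n-1$, proved by partial fractions). The polynomial in the bracket of Proposition~\ref{prop:Baraglia} has degree $n-i$, so its $\lambda^{n-1}$-coefficient vanishes exactly when $i\geq 2$. This gives $t(T\calB_{\SL(n,\CC)})=H^0(\Sigma_{\underline{a}},K_{\Sigma_{\underline{a}}})^-$, and $\calL$ is a visible Lagrangian by Proposition~\ref{prop:criterion}.

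For the Fourier--Mukai claim I apply the $\GL(n,\CC)$ version of Theorem~\ref{theo:FM} supplied by Remark~\ref{rema:GL}, using that $\Jac(\Sigma)$ is principally polarized. Dualizing the exact sequence
\[
0\to \pi^*\Jac(X/\calB'^\reg)\to \Jac(\Sigma/\calB'^\reg)\to Q\to 0
\]
and using that $\pi^*$ and $\Nm_{\Sigma/X}$ are mutually adjoint under the principal polarization identifies $Q^\vee$ with $\Prym(\Sigma/\calB'^\reg)$. The fiber-wise Fourier--Mukai transform of $\calO_\calL$ is therefore supported on the orbit $\Prym(\Sigma/\calB'^\reg)\cdot s_H(\calB'^\reg)$. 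Since $s_H(\calB_{\SL(n,\CC)})\subset \calM_{\SL(n,\CC)}$ (trivial determinant and, when $a_1=0$, traceless Higgs field) and the $\Prym$-action preserves the $\SL(n,\CC)$-constraints, this orbit is exactly $\calM_{\SL(n,\CC)}\rest_{\calB'^\reg}$; passing to the closure yields the claim. The main obstacle throughout is the residue-identity step verifying condition (ii); once that is in place, the torsor description and the Fourier--Mukai identification follow formally from the apparatus of Section~\ref{sec:hitchin-systems}.
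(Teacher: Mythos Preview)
Your proof is correct and follows the same overall architecture as the paper's: identify $\calL$ as the $\pi^*\Jac(X)$-torsor through the Hitchin section, verify the Lagrangian condition via the Serre pairing, and then apply the $\GL(n,\CC)$-version of Theorem~\ref{theo:FM} to the exact sequence with quotient $\Prym(\Sigma)/\pi^*\Jac(X)[n]$ (whose dual is $\Prym(\Sigma)$).

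The one substantive difference lies in how you establish $t(T\calB_{\SL(n,\CC)})=H^0(\Sigma,K_\Sigma)^-$. You verify $\mathrm{pr}_X\circ t=0$ on $\bigoplus_{i\ge 2}H^0(X,K_X^i)$ by a direct sheet-wise computation using the Lagrange interpolation identity $\sum_j f(\lambda_j)/p'(\lambda_j)=[\lambda^{n-1}]f$. The paper instead records this restriction in Section~\ref{sec:prel-Hitchin_syst} as a structural consequence: since $t$ is induced by the symplectic form and $\calM_{\SL(n,\CC)}\subset\calM_{\GL(n,\CC)}$ is a symplectic submanifold with $\Prym$-fibers, $t$ must carry $T\calB_{\SL(n,\CC)}$ onto the Serre-annihilator of $H^1(X,\calO_X)$, namely $H^0(\Sigma,K_\Sigma)^-$. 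Your argument is more elementary and self-contained; the paper's is shorter but leans on the ambient symplectic geometry. One minor omission: you should remark (as the paper does) that tensoring by $L\in\Jac(X)$ preserves stability, so that $\calL$ actually lands in the moduli space.
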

\begin{proof} Tensoring with a line bundle preserves stability. Hence $\calL$ is well defined.
 The Hitchin fiber over $\underline{a} \in \calB'^\reg$ reflects the splitting of the differentials on $\Sigma$ by the isomorphism
\[ \Jac(\Sigma_{\underline{a}})= \pi^*\Jac(X) \times \Prym(\Sigma_{\underline{a}})/\pi^*\Jac(X)[n],
\] where $\pi^*\Jac(X)[n]$ acts diagonally (see Mumford \cite{Mumford}). In particular, we have an exact sequence of abelian schemes over $\calB'^\reg$
\begin{align} 0 \to \Jac(X) \times \calB'^\reg \xrightarrow{\iota} \Jac(\Sigma/\calB'^\reg) \to \Prym(\Sigma/\calB'^\reg)/ \pi^* \Jac(X)[n] \to 0. \label{eq:sequence_Jac_Prym}
\end{align}
 As explained in Section~\ref{sec:prel-Hitchin_syst}, the holomorphic symplectic form on \smash{$\Hit^{-1}\bigl(\calB^{\reg}_{\GL(n,\CC)}\bigr) \subset \calM_{\GL(n,\CC)}$} restricts to the Serre pairing between the tangent space of the base identified with $H^0({\Sigma_{\underline{a}}},K_{\Sigma_{\underline{a}}})$ by Proposition~\ref{prop:Baraglia} and the tangent space to the fibers $H^1({\Sigma_{\underline{a}}}, \calO_{\Sigma_{\underline{a}}})$. With respect to this pairing, we have
 \[ H^1(X,\calO_X)^\perp=H^0({\Sigma_{\underline{a}}},K_{\Sigma_{\underline{a}}})^-.
 \]
 The subtorsor $\calL \subset \Hit^{-1}(\calB'^\reg)$ that is defined by the action of $\Jac(X) \times \calB'^\reg$ on the Hitchin section has vertical tangent bundle $H^1(X,\calO_X)$ and the tangent space to the $\SL(n,\CC)$ Hitchin base are identified with $H^0(\Sigma_{\underline{a}},K_{\Sigma_{\underline{a}}})^-$. Therefore, $\calL$ is a visible Lagrangian.

 By Theorem~\ref{theo:FM}, the fiber-wise Fourier--Mukai transform of the structure sheaf of $\calL$ over $\calB'^\reg$ is supported on torsor over the abelian scheme that is dual to quotient in the exact sequence~(\ref{eq:sequence_Jac_Prym}). That is the abelian scheme \smash{$\Prym\bigl(\Sigma/ \calB^{\reg}_{\SL(n,\CC)}\bigr)$}. The closure of this locus is the moduli space of $\SL(n,\CC)$-Higgs bundles.
\end{proof}

We expect that every embedding of complex reductive Lie groups $G_1 \subset G_2$, such that center of $G_1$ is mapped to the center of $G_2$ defines a visible Lagrangian
\[ \calL \to \calB_{G_1^{\rm L}}\subset \calB_{G_2^{\rm L}}\cong \calB_{G_2}
 \] Fourier--Mukai dual to the image of the induced morphism of moduli spaces $\calM_{G_1} \rightarrow \calM_{G_2}$.

By definition, the visible Lagrangian of Theorem~\ref{theo:Lagr_SL_in_GL} has connected fibers. We provide an example with disconnected fibers by acting on a multi-section of the $\SL(2,\CC)$-Hitchin map instead of a section. Consider the moduli space of $\SL(2,\RR)$-Higgs bundles $\calM_{\SL(2,\RR)}$. The Hitchin map restricts to a $2^{6g-6}$-covering
\[ \Hit\rest\colon \ \calM_{\SL(2,\RR)} \cap \Hit^{-1}\calB_{\SL(2,\CC)}^\reg \to \calB_{\SL(2,\CC)}^\reg
 \]
(see, for example, \cite[Corollary~9.3]{Horn1}).
\begin{theo}\label{theo:Lagr_SL_in_GL_multisection} Let $\calB'=\calB_{\SL(2,\CC)} \subset \calB_{\GL(2,\CC)}$ with \smash{$\calB'^\reg=\calB^{\reg}_{\SL(2,\CC)}$}. We can act by the trivial torsor $\Jac(X) \times \calB'$ on $\calM_{\SL(2,\RR)} \to \calB'$ by tensoring. The orbit defines a visible Lagrangian $\calL \to \calB'$. For $\underline{a} \in \calB'^\reg$, the fiber-wise Fourier--Mukai transform of $\calO_{\calL_{\underline{a}}}$ is supported on $\calM_{\SL(2,\CC)} \subset \calM_{\GL(2,\CC)}$ and given by a flat vector bundle of rank $6g-6$ on $\Prym(\Sigma_{\underline{a}})$.
\end{theo}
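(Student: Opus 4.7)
The approach parallels the proof of Theorem \ref{theo:Lagr_SL_in_GL}, with the main new ingredient being that $\calM_{\SL(2,\RR)}$ is a finite multisection rather than a single section. This produces disconnected fibers for $\calL$ whose Fourier-Mukai transforms sum to a higher-rank bundle.

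First I verify visibility and the Lagrangian property. Over $\underline{a} \in \calB'^\reg$ I use the splitting $\Jac(\Sigma_{\underline{a}}) \cong (\pi^*\Jac(X)\times \Prym(\Sigma_{\underline{a}}))/\pi^*\Jac(X)[2]$ from Theorem \ref{theo:Lagr_SL_in_GL}. The $2^{6g-6}$ points of $\calM_{\SL(2,\RR)}\cap \Hit^{-1}(\underline{a})$ form a $\Prym(\Sigma_{\underline{a}})[2]$-orbit of the Hitchin section, while the tensorial $\Jac(X)$-action on a Higgs bundle corresponds to translation by $\pi^*\Jac(X)$ in the Hitchin fiber. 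Consequently $\calL_{\underline{a}}$ is a finite disjoint union of $\pi^*\Jac(X)$-translates, indexed by the distinct cosets. Each connected component is a complex subtorus with the same vertical tangent space $H^1(X,\calO_X) \subset H^1(\Sigma_{\underline{a}}, \calO_{\Sigma_{\underline{a}}})$ and the same image $\calB'^\reg$, so the Lagrangian condition in the component-wise version of Proposition \ref{prop:criterion} follows exactly as in Theorem \ref{theo:Lagr_SL_in_GL} via Proposition \ref{prop:Baraglia} and the splitting $H^0(\Sigma_{\underline{a}},K_{\Sigma_{\underline{a}}}) = H^0(X,K_X) \oplus H^0(\Sigma_{\underline{a}},K_{\Sigma_{\underline{a}}})^-$.

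For the Fourier-Mukai computation I decompose
\[ \calO_{\calL_{\underline{a}}} = \bigoplus_{p} \calO_{\pi^*\Jac(X)+p}, \]
where $p$ ranges over representatives of the distinct cosets. Each summand equals $t_p^*\calO_{\pi^*\Jac(X)}$, where $t_p$ is translation by $p$. Covariance of the symmetric Fourier-Mukai transform under translation gives $\SFM(t_p^*\calF) \cong \calP_p \otimes \SFM(\calF)$, where $\calP_p$ is the line bundle on $\Jac(\Sigma_{\underline{a}})^\vee$ parametrized by $p$, and Theorem \ref{theo:Lagr_SL_in_GL} identifies $\SFM(\calO_{\pi^*\Jac(X)})$ with $\calO_{\Prym(\Sigma_{\underline{a}})}$. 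Hence each summand transforms to $\calP_p \rest_{\Prym(\Sigma_{\underline{a}})}$; because $p$ is $2$-torsion in $\Prym$, this restriction is a torsion (in particular flat) degree-zero line bundle on $\Prym(\Sigma_{\underline{a}}) \subset \calM_{\SL(2,\CC)}$. Summing over $p$ yields the asserted flat vector bundle, whose rank is the number of distinct cosets.

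The main obstacle lies in the bookkeeping of Step 1: identifying the $2^{6g-6}$ points of $\calM_{\SL(2,\RR)}$ over $\underline{a}$ as precisely the $\Prym(\Sigma_{\underline{a}})[2]$-orbit of the Hitchin section (using the standard parametrization of $\SL(2,\RR)$-Higgs bundles via theta characteristics together with the Prym $2$-torsion), and then determining how these points distribute among $\pi^*\Jac(X)$-cosets in order to match the stated rank. All remaining ingredients are straightforward extensions of Theorem \ref{theo:Lagr_SL_in_GL} and Theorem \ref{theo:FM}.
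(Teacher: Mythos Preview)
Your argument is essentially the paper's: both decompose $\calL_{\underline{a}}$ into $\pi^*\Jac(X)$-translates of the sheets of the multisection, invoke Theorem \ref{theo:Lagr_SL_in_GL} componentwise for the Lagrangian property, and compute the Fourier-Mukai transform via the translation rule $\SFM(t_p^*\iota_*\calO_{\Jac(X)})=\calP_p\otimes\calO_{\Prym(\Sigma_{\underline{a}})}$.

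The one difference is that you attempt to identify the $\SL(2,\RR)$-fiber explicitly as the $\Prym(\Sigma_{\underline{a}})[2]$-orbit of $s_H$ and then count distinct $\pi^*\Jac(X)$-cosets to obtain the rank. The paper does not do this: it simply chooses local sections $s_1=s_H,s_2,\dots$ of the finite covering $\calM_{\SL(2,\RR)}\to\calB'^\reg$, sets $l_i=(s_1-s_i)(\underline{a})$, and takes the rank to be the number of sheets with no coset analysis. So your ``main obstacle'' is self-imposed and not part of the paper's argument (and indeed the paper itself has a numerical discrepancy between the $2^{6g-6}$ sheets announced before the theorem and the $6g-6$ sections written in the proof, so this bookkeeping is not resolved there either). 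One minor point: you do not need $p$ to be $2$-torsion for $\calP_p\rest_{\Prym}$ to be flat---any $\calP_p$ restricts to a degree-zero line bundle on an abelian subvariety.
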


\begin{proof}
 Let $U \subset \calB^{\reg}_{\SL(2,\CC)}$ open, such that there exist sections
 \[ s_1=s_H, s_2, \dots, s_{6g-6}\colon \ U \to \calM_{\SL(2,\CC)}
 \]
 such that $\Hit^{-1}(U) \cap \calM_{\SL(2,\RR)} = \bigsqcup_{i=1}^{6g-6} s_i(U)$. Then the orbit of each $s_i(U)$ under tensoring with line bundles in $\Jac(X)$ is Lagrangian by the previous proof. Hence, $\calL$ defines a visible Lagrangians.

 We identify the Hitchin fibers with the abelian scheme $\Jac(\Sigma)$ using the Hitchin section~$s_1$. Fix $\underline{a} \in U$ and define $l_i = (s_1-s_i)(\underline{a}) \in \Jac(\Sigma_{\underline{a}})$. Recall the exact sequence of abelian schemes~(\ref{eq:sequence_Jac_Prym}). We have
 \[ \calO_{\calL_{\underline{a}}} = \bigoplus\limits_{i=1}^{6g-6} t^*_{l_i} \iota_* \calO_{\Jac(X)},
 \] where $t_{l_i}$ denotes the translation by $l_i$ on $\Jac(\Sigma_{\underline{a}})$. Let $J=\Jac(\Sigma_{\underline{a}})$ and $\calP \to J \times J$ the Poincar\'e bundle. Using the relation between tensor product and translations under Fourier--Mukai transform (see \cite{Schnell}), we obtain
 \begin{align*} \SFM_{J}(t^*_{l_i} \iota_*\calO_{\Jac(X)}) = \calP_{l_i} \otimes \SFM_{J}(\iota_* \calO_{\Jac(X)})= \calP_{l_i} \otimes \bigl(\iota^\vee\bigr)^* \CC_0 = \calP_{l_i} \otimes \calO_{\Prym(\Sigma_{\underline{a}})}.
 \end{align*}
 The right hand side defines a flat line bundle on $\Prym(\Sigma_{\underline{a}})$. The Fourier--Mukai of the structure sheaf $\calO_{\calL_{\underline{a}}}$ is the direct sum of all these flat line bundles.
\end{proof}

\textit{Visible Lagrangians over the singular locus of the Hitchin base:}
Other examples of visible Lagrangians were considered in~\cite{FGOP}. Here $\calB'^\reg \subset \calB_{\GL(n,\CC)}$ is the locus of spectral curves with the maximal number of $n(n-1)(g-1)$ nodes as their only singularities and $\calB'=\overline{\calB'^\reg}$. In particular, these Lagrangians are completely contained in the singular locus of the Hitchin map and hence Theorems~\ref{theo:visible_Lagr} and~\ref{theo:FM} do not apply. The compactified Jacobians over $\calB'^\reg$ contain subvarieties isomorphic to \smash{$\bigl(\PP^1\bigr)^{n(n-1)(g-1)}$}, which can be interpreted as parameters for Hecke modifications of the Higgs bundles at the node by~\cite{Horn1}. Applying these Hecke modifications to the Hitchin section yields a visible Lagrangian over~$\calB'$. Interestingly, in this work the authors considered Arinkin's Fourier--Mukai transform for compactified Jacobians and found that the support of the fiber-wise Fourier--Mukai transform is supported on a hyperholomorphic subvariety -- the so-called Narasimhan--Ramanan BBB-brane. In the subsequent work~\cite{FHHO}, together with the first author this construction will be generalized to visible Lagrangians $\calL \to \calB'$ over the closure of the locus of spectral curves with any number of nodes as their only singularities.

\textit{Lagrangians that are not visible:} The upward flow of a very stable Higgs bundle considered in~\cite{HauselHitchin, PaulyPeon} defines a complex Lagrangian that is supported over the whole Hitchin base and hence is not visible.

\section{Parallelogram-tiled surfaces and pillowcase covers}\label{sec:rhomb-tiled-surf}

In this section, we will briefly review the interpretation of abelian and quadratic differentials in terms of flat geometry. Then we will discuss certain types of these flat geometries on a Riemann surface that will play a special role in the following section.

An abelian differential $\lambda \in H^0(\Sigma,K_\Sigma)$ on a Riemann surface $\Sigma$ determines a singular flat metric, such that all transition functions are translations. Denote by $Z(\lambda) \subset \Sigma$ the zeros of $\lambda$. A coordinate of the flat metric at $y \in \Sigma \setminus Z(\lambda)$ is a holomorphic coordinate $z$ at $y$, such that $\lambda= \d z$. In this way, one obtains a flat metric on $\Sigma\setminus Z(\lambda)$, such that coordinate transitions are translation. It extends to a singular flat metric on $\Sigma$ by cone points of cone angle $(k+1)\pi$ at a~zero of $\lambda$ of order $k$. This is a so-called translation surface.

Similarly, we can associate a singular flat metric to a quadratic differential $(X,q)$, where $X$ is a Riemann surfaces and \smash{$q \in H^0\bigl(X,K_X^2\bigr)$}. A flat coordinate at $x \in X\setminus Z(q)$ is a holomorphic coordinate $z$ at $x$, such that $q= \d z ^{\otimes 2}$. In this case, the coordinate functions are compositions of translations and reflections. It extends to a singular flat metric on $X$ by cone points of cone angle $(k+2)\pi$ at a zero of $q$ of order $k$. This is a so-called half-translation surface. When $q$ has simple zeros only, the spectral curve defined in Section~\ref{sec:hitchin-systems} is referred to as the canonical cover of~$(X,q)$ from this point of view. It is the unique branched double cover of $X$, such that the pullback of~$q$ has a square-root. (Here we consider $\lambda$ as a~section of $K_\Sigma$ instead of $\pi^*K_X$ as in Section~\ref{sec:hitchin-systems}. We have $\pi^*K_X=K_\Sigma(-R)$. Hence, if $q$ has simple zeros, then the abelian differential $\lambda$ has double zeros at all branch points.)

We say that a quadratic differential $(X,q)$ is of type $\mu(q) = (m_1,\dots,m_n)$ if the orders of the zeros of the differential are $m_1,\dots,m_n$.
We will use exponential notation if multiple $m_i$ agree, i.e., we write \smash{$\bigl(1^{4g(X)-4}\bigr)$} for $(1, \dots, 1)$.

In the following, particularly symmetric (half-)translation surfaces play a special role: Para\-llelo\-gram-tiled surfaces and pillowcase covers. We obtain coordinates on the stratum of abelian differentials with fixed distribution of zeros by recording periods. The periods of $(\Sigma,\lambda)$ in $H^1(\Sigma,Z(\lambda),\CC)$ are given by
\[ H_1(\Sigma, Z(\lambda),\CC) \to \CC, \qquad c \mapsto \int_c \lambda
\]
and local coordinates, the so called \emph{period coordinates}, are given by the image of a basis of relative homology under this map.
The coordinate changes of period coordinates are induced by diffeomorphisms of $(\Sigma,Z(\lambda))$ and hence preserve the lattice
\[ H^1(\Sigma, Z(\lambda), \ZZ \oplus i \ZZ) \subset H^1(\Sigma,Z(\lambda),\CC).
\]

These integral points correspond to square-tiled surfaces: One obtains a cover of an elliptic curve by
\[ p\colon \ \Sigma \to \CC /\ZZ \oplus i\ZZ, \qquad y \mapsto \int_{y_0}^y \lambda
\]
for a choice of base point $y_0 \in \Sigma$. This cover is branched over one point $0 \in E$ with ramification points $z_i \in Z(\lambda)$ and ramification profile $(\ord_{z_1}(\lambda),\dots,\ord_{z_n}(\lambda))$. In particular, this is a cover of flat surfaces, i.e., $\lambda=p^*\omega$ for some abelian differential $\omega$ on $E$. By rescaling the flat torus $(E,\omega)$, we obtain a dense subset of square-tiled surfaces in each stratum. We use a slight generalization.

\begin{defi} A translation surface $(\Sigma, \lambda)$ is parallelogram-tiled if and only if there exists a branched cover $p\colon \Sigma \to E$ branched over one point, such that $\lambda=p^*\omega$ for some abelian differential $\omega$ on $E$.
\end{defi}

Given a parallelogram-tiled surface, we can act by $\GL^+(2,\RR)$ on the representing polygon and obtain a family of parallelogram-tiled surfaces over the $j$-line, the moduli space of elliptic curves. For $j=1728$, we recover a square-tiled surface. The analogue of parallelogram-tiled surfaces for quadratic differentials are pillowcase covers. A~pillowcase is a half-translation surface $\bigl(\PP^1,\eta\bigr)$, where~$\eta$ has four simple poles $0$, $1$, $\infty$, $x$, see Figure~\ref{fig:Pillowcase}.

\begin{figure}[t]
 \centering
	\begin{tikzpicture}
		\begin{scope}[thick,decoration={
 			markings,
 			mark=at position 0.5 with {\arrow{>}}}
 			]
 			\draw[postaction={decorate}] (0,0.5)--(1,0.5);
 			\draw[postaction={decorate}] (2,0.5)--(1,0.5);
 			\draw[postaction={decorate}] (0,1.5)--(1,1.5);
 			\draw[postaction={decorate}] (2,1.5)--(1,1.5);
 			\draw (0,0.5)--(0,1.5)--(1,1.5)--(1,0.5)--(2,0.5)--(2,1.5);
 				\foreach \x in {0,1,2}
 					\foreach \y in {0.5,1.5}
 						{ \draw [fill=red,red] (\x,\y) circle [radius=0.04];
 						}
 						
 			\draw[very thick,<-] (3,1)--(4,1);

 			\draw (5,0)--(6,0)--(7,0)--(7,1)--(7,2)--(6,2)--(5,2)--(5,1)--(5,0);
 			\draw (6,0)--(6,1)--(6,2);
 			\draw (5,1)--(6,1)--(7,1);
 			\foreach \x in {5,6,7}
 				\foreach \y in {0,1,2}
 					{\draw [fill=red,red] (\x,\y) circle [radius=0.04];
 					}
 			\draw [<->,blue] (6.2,0.8)--(5.8,1.2);
 		\draw [<->,blue] (5.8,0.8)--(6.2,1.2);
		\end{scope}
	\end{tikzpicture}\vspace{-2mm}

	\caption{Pillowcase with canonical cover: Opposite sides identified, when not indicated otherwise. The involution on the cover acts as central symmetry in the two-torsion points.} \label{fig:Pillowcase}
\end{figure}
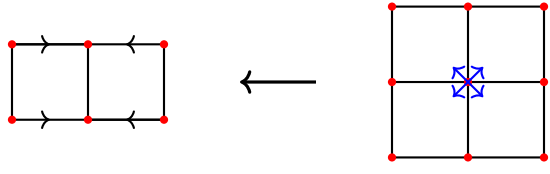

\begin{defi} A half-translation surface $(X,q)$ is called pillowcase cover if there exists a cover $ \check{p}\colon X \to \PP^1$ branched over $D=(0,1,\infty,x)$, such that $q= \check{p}^*\eta$ for a quadratic differential $\eta$ on $\PP^1$ with simple poles at~$D$.
\end{defi}
The canonical cover of $\bigl(\PP^1,\eta\bigr)$ is the elliptic differential $(E,\omega)$, where $E=\CC/(\ZZ+\tau\ZZ)$ with $\lambda(\tau)=x$ and the involution is given by multiplication with $-1$. Here $\lambda$ is the modular lambda function.
For a given $x$, the number $\tau$ can be computed explicitly.
Let $K$ denote the complete elliptic integral of the first kind
\begin{equation} \label{eq:elliptic_integral}
	K(k) = \int_0^{\frac{\pi}{2}} \frac{\mathrm{d}\theta}{\sqrt{1-k^2\sin^2\theta}}.
\end{equation}
Then $t(x) = i\frac{K(\sqrt{1-x})}{K(\sqrt{x})}$ is a section of $\lambda$.
There is the following well-known relation between para\-llelo\-gram-tiled surfaces and pillowcase covers.

\begin{lemm}\label{lemm:pillow} Let $(X,q)$ be a half-trans\-lation surface. The canonical cover $(\Sigma,\lambda)$ is a para\-llelo\-gram-tiled surface if and only if the quadratic differential is a pillowcase cover.
\end{lemm}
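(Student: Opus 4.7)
The plan is to use the canonical double cover $\tilde{\pi} \colon (E,\omega) \to (\PP^1,\eta)$ of the pillowcase as the bridge between the parallelogram-tiled structure upstairs on $\Sigma$ and the pillowcase cover structure downstairs on $X$. Its four branch points are precisely the images of the 2-torsion subgroup $E[2]$, namely $\{0,1,\infty,x\}$.

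For the implication ``pillowcase cover $\Rightarrow$ parallelogram-tiled'', suppose $\check{p} \colon X \to \PP^1$ realizes $(X,q)$ as a pillowcase cover with $q=\check{p}^*\eta$. I would form the normalization $Y$ of the fiber product $X \times_{\PP^1} E$, with projections $p_1 \colon Y \to X$ and $p_2 \colon Y \to E$. The identity
\[
(p_2^*\omega)^2 = p_2^*\tilde{\pi}^*\eta = p_1^*\check{p}^*\eta = p_1^* q
\]
shows that every connected component of $Y$ is a double cover of $X$ on which $q$ admits a square root, so by the universal property of the canonical cover $\Sigma$ is identified with (a connected component of) $Y$ equipped with a map $p_2 \colon \Sigma \to E$ satisfying $p_2^*\omega = \lambda$. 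A local analysis of the fiber product (via the local models $u^2 = w^e$) confines the branch locus of $p_2$ to $\tilde{\pi}^{-1}(\{0,1,\infty,x\}) = E[2]$. Post-composing with the unramified isogeny $[2] \colon E \to E$, which collapses $E[2]$ to $\{0\}$ and satisfies $[2]^*\omega = 2\omega$, gives $p := [2]\circ p_2 \colon \Sigma \to E$ branched over the single point $0 \in E$ with $\lambda = p^*(\omega/2)$. This exhibits $(\Sigma,\lambda)$ as a parallelogram-tiled surface.

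For the converse, suppose $p \colon \Sigma \to E$ exhibits $(\Sigma,\lambda)$ as parallelogram-tiled with $\lambda = p^*\omega$ and branch locus a single point $p_b \in E$. I would descend to $X$ via the canonical involution $\sigma$ of $\Sigma$, using $\sigma^*\lambda = -\lambda$. The equality $(p\circ\sigma)^*\omega = -p^*\omega = ([-1]\circ p)^*\omega$ combined with the Abel-Jacobi theorem forces $p\circ\sigma = [-1]\circ p + c$ for some $c \in E$. Evaluating at any ramification point of $p$, which $\sigma$ must permute inside $p^{-1}(p_b)$, gives $c = 2p_b$, so after translating $p$ by $-p_b$ we may assume $p_b = 0$ and $p\circ\sigma = [-1]\circ p$. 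Then $p$ descends to $\check{p} \colon X = \Sigma/\sigma \to E/[-1] = \PP^1$. With $\eta$ the unique quadratic differential on $\PP^1$ satisfying $\tilde{\pi}^*\eta = \omega^2$ (which necessarily has simple poles exactly at the four branch values of $\tilde{\pi}$), the computation $\pi^*\check{p}^*\eta = p^*\tilde{\pi}^*\eta = \lambda^2 = \pi^*q$ together with injectivity of $\pi^*$ gives $\check{p}^*\eta = q$. A local comparison of ramification indices in the commutative square shows $\check{p}$ is unramified above $\PP^1 \setminus \{0,1,\infty,x\}$, so $(X,q)$ is a pillowcase cover.

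The two points that I expect to require care in writing up are: (a) the possible disconnectedness of $Y$ in the forward direction, which happens exactly when $q$ is globally a square on $X$ and then $\Sigma$ itself splits into two copies of $X$, to be handled as a parallel special case rather than an obstruction; and (b) the local ramification bookkeeping for the fiber product, needed both to confine the branch locus of $p_2$ to $E[2]$ in the forward direction and to rule out ramification of $\check{p}$ away from $\{0,1,\infty,x\}$ in the converse.
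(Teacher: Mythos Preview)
Your proof is correct and reaches the same commutative square of coverings as the paper, but the route is genuinely different in both directions. For the forward implication the paper builds the map $\Sigma\to E$ by hand, arguing that a point of $\Sigma$ away from $Z(\lambda)$ is a local choice of square root of $q$ and hence of $\eta$, then extends over the zeros by a local ramification count; you instead take the normalized fiber product $X\times_{\PP^1}E$ and invoke the universal property of the canonical cover to identify it with $\Sigma$. For the converse the paper uses flat geometry directly: the involution $J$ sends squares to squares and hence descends to $[-1]$ on $E$; you replace this by the cleaner algebraic argument that $(p\circ\sigma)^*\omega=([-1]\circ p)^*\omega$ forces $p\circ\sigma=[-1]\circ p+c$, and then pin down $c=2p_b$ by evaluating at a ramification point. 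Your approach is more self-contained algebro-geometrically and makes the equivariance $p\circ\sigma=[-1]\circ p$ explicit rather than heuristic; the paper's approach is shorter and stays closer to the half-translation picture that motivates the statement. Both end with the same post-composition by $[2]:E\to E$ (equivalently the quotient $E\to E/E[2]$) to reduce the branch locus to a single point.

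The two caveats you flag at the end are real but routine. For (a), once you have the map $Y\to\Sigma$ over $X$ sending $y\mapsto(p_1(y),p_2^*\omega(y))$, degree reasons force it to be an isomorphism whenever $\Sigma$ is connected, so disconnectedness of $Y$ can only occur when $q$ is already a global square; this is exactly the degenerate case you identify. For (b), the local model $w^e=v^2$ you mention indeed shows that $p_2$ can ramify only over $E[2]$, and in the converse direction the multiplicativity of ramification indices in the square immediately gives that $\check{p}$ is unramified away from the four marked points.
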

\begin{proof}
 Starting from a pillowcase cover as above, we have the following diagram:
\[	\begin{tikzcd}
		& (\Sigma,\lambda) \ar[rd,"p",dashed] \ar[ld,"\pi" above, "\mod J" right]& \\ (X,q) \ar[rd,"\check{p}"] & & (E, \omega) \ar[ld,"2:1"] \\ & \bigl(\PP^1,\eta\bigr) &
\end{tikzcd}
\]
and want to show that there exist the dashed arrow $p\colon \Sigma \to E$, such that the diagram commutes. Away from the singularities of the flat structure associated with $(X,q)$ a point on $\Sigma$ corresponds to a choice of a (local) square of~$q$. This corresponds to a choice of a local square root of $\eta$ under the covering map to $\PP^1$. Hence, there is a induced map $p\colon \Sigma\setminus Z(\lambda) \to E \setminus Z(\omega)$ such that $p^*\omega=\lambda$.
The map $p$ uniquely extends to a map of Riemann surfaces $\Sigma \to E$. A zero $x \in X$ of $q$ of order $k\geq 0$ corresponds to a $(k+2):1$-branch point of $\check{p}$. Hence, the map can be extended to $p\colon \Sigma \to E$ by gluing in a $k:1$ ramification point, if $k$ is odd and a $\frac{k}{2}:1$ ramification point, if~$k$ is even, at $y \in \pi^{-1}(x)$.
 By construction the relation of the differentials persists under this extension. Taking the quotient of $E$ by two-torsion points we obtain a map $(\Sigma,\lambda) \to E/E[2]$, branched over one point such that $\lambda$ is the pullback of a differential on~$E$. Hence, it is a parallelogram-tiled surface.

For the converse, we start with the configuration
\[ 	\begin{tikzcd}
		& (\Sigma,\lambda) \ar[rd,"p"] \ar[ld,"\pi" above left, "\mod J" right]& \\ (X,q) & & (E, \omega).
	\end{tikzcd}
\]
The differential $\lambda$ is anti-symmetric with respect to an involution~$J$. In particular, $J$ sends singularities of the flat structure to singularities of the same type, saddle connections to saddle connections and hence squares to squares. Hence, it descends to the elliptic curve $(E,\omega)$ to the reflection in the two torsion points of~$E$. The quotient is the pillowcase surface as illustrated in Figure~\ref{fig:Pillowcase}. In particular, there is an induced map $\check{p}\colon \Sigma/J=X \to \PP^1$, such that $q=\check{p}^*\eta$.
\end{proof}

\begin{rema}
 The proof of the previous lemma shows that one can always assume the pillowcase cover to be non-simply branched over only one point on $\PP^1$. This can be achieved by taking the quotient by the order four automorphism of $\PP^1$ that permutes the four marked points preserving the cross-ratio.
\end{rema}

In the following, a special kind of pillowcase cover will be important.
\begin{defi}\label{defi:uniform_pillow_cover}
	We call a pillowcase cover $(X,q)$ \emph{uniform} if every fiber of $\check{p}\colon X \to \PP^1$ over $y \in D$ consists of ramification points of the same ramification index~$i$.
\end{defi}

\section{Multifold pillowcase covers} \label{sec:pill-sever-ways}

Motivated by the connection to visible Lagrangians in Theorem~\ref{theo:visi_Lagr_over_line}, we are interested in Riemann surfaces $X$ which admit multiple quadratic differentials $q_1,\dots,q_n \in H^0\bigl(X,K^{2}\bigr)$ such that
\begin{itemize}\itemsep=0pt
	\item the vanishing loci $Z(q_1), \dots, Z(q_n)$ are pairwise different,
	\item the half-translation surfaces $(X, q_i)$ are pillowcase covers.
 \end{itemize}
In Corollary \ref{coro:visible_Lagr_over_line_sl2C}, we will prove that those $X$ will allow for multiple visible Lagrangian subvarieties $\calL_i \subset \calM_{\SL(2,\CC)}$ projecting to the lines $\CC q_i \subset H^0\bigl(X,K^{2}\bigr)$.
\begin{defi}
	We call a Riemann surface $X$ with quadratic differentials $q_1,\dots,q_n$ as above a~\emph{$n$-fold pillowcase cover}.
	We call a $n$-fold pillowcase cover a \emph{multifold pillowcase cover} if $n \geq 2$.
\end{defi}

Apart from being uniform, we want the pillowcase covers to have simple zeros only.

Two quadratic differentials $q_1$, $q_2$ on $X$ are called isomorphic if there exists an automorphism~$\varphi$ of~$X$ such that $\varphi^* q_1 = q_2$.
We remark that from the point of view of flat geometry isomorphic differentials are usually not distinguished.
However two isomorphic differentials might still have different vanishing loci and therefore correspond to different points in the $\SL(2,\CC)$-Hitchin base. Hence, we will treat these differentials as different from each other in the following.
In this section we will prove the following.

\begin{theo}\label{theo:multifold_pillow}
	For infinitely many genera $g$, there exist multifold uniform pillowcase covers with simple zeros only.
\end{theo}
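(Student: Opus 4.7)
My plan is to prove the theorem in two stages. First, I would exhibit a single low-genus example of a multifold uniform pillowcase cover with simple zeros, and then I would amplify this via unramified covers to obtain infinitely many genera.

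For the base case, I propose to take $X_0$ to be the Bolza curve, the unique compact Riemann surface of genus $2$ with automorphism group of order $48$, explicitly $\Aut(X_0) \cong \GL_2(\mathbb{F}_3)$. This group contains four Sylow $3$-subgroups of order $3$. For each such subgroup $\langle \sigma_i \rangle$, the Lefschetz fixed point formula gives that $\sigma_i$ has exactly $4$ fixed points on $X_0$, and Riemann-Hurwitz then shows $X_0/\langle \sigma_i \rangle \cong \PP^1$, with $\check p_i: X_0 \to \PP^1$ branched at $4$ points of ramification index $3$ (each with a unique preimage). Hence each $\check p_i$ makes $X_0$ a uniform pillowcase cover with simple zeros, with vanishing locus $Z(q_i^{(0)}) = \Fix(\sigma_i)$. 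Modding out by the center $\{\pm I\} \subset \GL_2(\mathbb{F}_3)$, one sees that $\PGL_2(\mathbb{F}_3) \cong S_4$ acts on the hyperelliptic $\PP^1$, and the four pairs of fixed points of the induced order-$3$ Möbius transformations are pairwise distinct (since any finite subgroup of $\PGL_2(\CC)$ fixing a pair of points is cyclic or dihedral, hence has a unique Sylow $3$-subgroup). Lifting through the hyperelliptic cover, the four vanishing loci on $X_0$ are pairwise distinct, so $X_0$ is a $4$-fold uniform pillowcase cover with simple zeros.

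For the amplification step, for any integer $n \geq 1$ I would take an unramified cover $\phi: X \to X_0$ of degree $n$; such covers exist since $\pi_1(X_0)$ admits subgroups of every finite index for $g(X_0) = 2$. Setting $q_i := \phi^* q_i^{(0)}$ for any two of the base differentials with $Z(q_1^{(0)}) \neq Z(q_2^{(0)})$, the étaleness of $\phi$ ensures that $\check p_i \circ \phi: X \to \PP^1$ has the same branch divisor in $\PP^1$ as $\check p_i$ and that every preimage of a branch point retains ramification index $3$. Hence each $(X, q_i)$ is a uniform pillowcase cover with simple zeros. Since $Z(q_i) = \phi^{-1}(Z(q_i^{(0)}))$, distinctness of the base vanishing loci lifts to $X$. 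By Riemann-Hurwitz $g(X) = n+1$, so we obtain multifold uniform pillowcase covers with simple zeros at every genus $g \geq 2$, which is more than infinitely many as required.

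The main obstacle is the base case: specifically, the combinatorial verification that the four Sylow $3$-subgroups of $\GL_2(\mathbb{F}_3)$ produce pairwise distinct vanishing loci on the Bolza curve. An alternative (but less explicit) route is to realize a base $X_0$ via a surjection of the orbifold fundamental group $\langle a,b,c,d \mid a^3 = b^3 = c^3 = d^3 = abcd = 1 \rangle$ onto a non-cyclic finite group admitting genuinely distinct pillowcase structures on the same Riemann surface, but I expect the Bolza route to be the most explicit and computationally convenient.
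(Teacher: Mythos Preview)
Your proposal is correct and follows the same two-step strategy as the paper: exhibit one multifold uniform pillowcase cover with simple zeros in low genus, then propagate by unramified covers to infinitely many genera. Both you and the paper use the genus-$2$ curve with $\Aut \cong \GL_2(\mathbb{F}_3)$ (the Bolza curve) as the seed example.

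The only real difference is in how the base case is realised. The paper quotients by the twelve subgroups of order~$6$, obtaining degree-$8$ maps to $\PP^1$ with ramification indices $2$ and $3$, and leaves the pairwise distinctness of the vanishing loci as an explicit (computer-assisted) check. You instead quotient by the four Sylow $3$-subgroups, obtaining degree-$3$ maps totally ramified over four points, and give a conceptual argument for distinctness via fixed-point sets of order-$3$ M\"obius transformations on the hyperelliptic $\PP^1$. Your argument is more self-contained and avoids any computation; the paper's choice yields more differentials (a $12$-fold rather than $4$-fold cover), which is irrelevant for the theorem as stated. One small point: the sentence ``the Lefschetz fixed point formula gives that $\sigma_i$ has exactly $4$ fixed points'' is a bit glib as written; you might instead note that the two fixed points of $\bar\sigma_i$ on $\PP^1$ cannot be Weierstrass (the $S_4$-stabiliser of a Weierstrass point has order $4$), so each lifts to two fixed points of $\sigma_i$, which together with Riemann--Hurwitz pins down $|\Fix(\sigma_i)|=4$ and $g(X_0/\langle\sigma_i\rangle)=0$.
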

\begin{proof}
	Assume that we know a single multifold uniform pillowcase cover $X$ with simple zeros only of some genus $g \geq 2$.
	Let $q_1, \dots, q_n$ be the corresponding quadratic differentials.
	Then we can obtain examples in infinitely many genera by taking unramified coverings $f\colon \widehat X \to X$ in different degrees and the differentials $\widehat q_i = f^* q_i$.

We will provide explicit examples in the following.
\end{proof}

The following examples have been found by a computer search using~\cite{GAP4}. There are many more examples to be found, but we will restrict our discussion to three examples in low genus.
While the claimed properties of the examples can be checked by hand, it is much more convenient to use a computer algebra system.

\begin{exam}[genus~2]
Consider the group $\GL(2,F_3)$ of order $48$ and let $f\colon X\to \PP^1$ be the $\GL(2, F_3)$-cover branched above three points with monodromy datum
\[
	\begin{pmatrix}0&1\\1&2\end{pmatrix}, \qquad
	\begin{pmatrix}2&0\\0&1\end{pmatrix}, \qquad
	\begin{pmatrix}2&2\\1&0\end{pmatrix}.
\]
The orders of the matrices are $8$, $2$ and $6$, and the genus of $X$ is $2$.
The group $\GL(2,F_3)$ has $12$ subgroups of order $6$ which we denote by $H_1, \dots, H_{12}$.
For each such subgroup the quotient~$X/H_i$ is of genus $0$, and the quotient map $g_i\colon X \to X/H_i$ is branched above four points with ramification orders~$2$ and~$3$.
Let~$\eta_i$ denote the quadratic differential on $X/H_i$ of type~$(-1^4)$ whose simple poles are supported at the branch points of~$g_i$.
The pullback $q_i := g_i^*\eta_i$ is a~uniform pillowcase cover with simple zeros only on $X$.
We claim that the vanishing loci of the differentials~$q_i$ are pairwise different. This is a very explicit but lengthy computation which is left to the reader.
In particular, $X$~is a $12$-fold uniform pillowcase cover with simple zeros only. 	
\end{exam}

\begin{exam}[genus~3]
Consider the group $\SL(3,F_2)$ of order $168$ and let $f\colon X\to\PP^1$ be the $\SL(3,F_2)$-cover branched above three points with monodromy datum
\[
	\begin{pmatrix}1&0&0\\0&1&0\\1&0&1\end{pmatrix}, \qquad
	\begin{pmatrix}0&1&0\\0&0&1\\1&0&0\end{pmatrix}, \qquad
	\begin{pmatrix}1&0&1\\1&0&0\\0&1&0\end{pmatrix}.
\]
The orders of the matrices are $2$, $3$ and $7$. The Riemann surface $X$ is the Klein quartic and has genus $3$.
The group $\SL(3,F_2)$ has $14$ subgroups of order~$24$, and as in the previous example each of those subgroups gives rise to a uniform pillowcase covers with simple zeros only.
In particular, $X$ is a $14$-fold uniform pillowcase cover with simple zeros only.
\end{exam}

\begin{exam}[non-isomorphic differentials]\label{exam:bifold_nonisom}
Let $G := A_4 \times \ZZ/3\ZZ$.
We choose generators $\langle a:= (1\, 2\, 3), b:=(1\, 2)(3\, 4) \rangle = A_4$ and $\langle c \rangle = \ZZ/3\ZZ$.
Consider the subgroups $H_1 := \langle a, c \rangle \cong (\ZZ/3\ZZ)^2$
and $H_2 := \langle a, b \rangle \cong A_4$.
Let $f\colon X \to \PP^1$ be the $G$-cover branched above three points with monodromy datum $\bigl(bc, a^2c^2, ab\bigr)$.
The genus of~$X$ is~$4$.

Consider the two intermediate covers $A_1 := X/H_1$ and $A_2 := X/H_2$, both of genus $0$.
We~define the two differentials $q_1$ and $q_2$ on $X$ as in the previous examples.

In this example, it is relatively easy to see that the differentials $q_1$ and $q_2$ are non-isomorphic.
For this, it is convenient to consider the respective canonical covers $(\Sigma_i, \lambda_i)$ of~$(X, q_i)$.
\end{exam}
\begin{prop}\label{prop:non-isomorphic_pillow}
	The canonical covers $\Sigma_1$ and $\Sigma_2$ are non-isomorphic.
	In particular, $(X,q_1) \not \cong (X,q_2)$, i.e., there does not exist an automorphism $\varphi\colon X \to X$, such that $\varphi^* q_1 = q_2$.
\end{prop}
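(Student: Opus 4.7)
The plan is to establish that $\Sigma_1$ and $\Sigma_2$ are non-isomorphic as Riemann surfaces; the ``in particular'' clause then follows from functoriality of the canonical cover construction applied to the data $(X, q_i)$.

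First I would compute the genus of each $\Sigma_i$ by Riemann--Hurwitz applied to the canonical double cover $\pi_i: \Sigma_i \to X$, which is branched exactly over the $4g(X)-4 = 12$ simple zeros of $q_i$. Both give $g(\Sigma_i) = 13$, so the genus does not distinguish them and a finer invariant is needed.

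Next I would extract such an invariant from the group-theoretic data. The Galois cover $f: X \to \PP^1$ realises $G = A_4 \times \ZZ/3\ZZ$ as a subgroup of $\Aut(X)$, and the vanishing divisor $Z(q_i)$ coincides with the branch locus of $\check{p}_i: X \to X/H_i$, hence is determined by the conjugacy data of $H_i$ in $G$. Because $H_1 = \langle a, c\rangle \cong (\ZZ/3\ZZ)^2$ is non-normal in $G$, while $H_2 = \langle a, b\rangle = A_4$ is a direct factor, the normalisers $N_G(H_i)$ and the stabilisers $\Aut(X, Z(q_i)) \subset \Aut(X)$ differ structurally. I would then compute $\Aut(\Sigma_i)$ in each case via the short exact sequence
\[
1 \to \ZZ/2\ZZ \to \Aut(\Sigma_i, \sigma_i) \to \Aut(X, Z(q_i)) \to 1,
\]
where $\sigma_i$ is the deck involution of $\pi_i$ and $\Aut(\Sigma_i, \sigma_i)$ denotes its centraliser. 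The monodromy datum $(bc, a^2c^2, ab)$ makes this a finite group-theoretic computation (readily done in GAP), and I expect the two resulting groups to be non-isomorphic.

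The main obstacle is ensuring that a hypothetical Riemann-surface isomorphism $\Sigma_1 \cong \Sigma_2$ must actually conjugate $\sigma_1$ to $\sigma_2$ so that the comparison of $\Aut(\Sigma_i, \sigma_i)$ becomes a comparison of $\Aut(\Sigma_i)$; for genus~$13$ this requires an argument excluding exotic involutions. A fallback that avoids this issue is to use Lemma~\ref{lemm:pillow}: the induced parallelogram tilings $p_i: \Sigma_i \to E_i$ have degrees $|H_1| = 9$ and $|H_2| = 12$, and the $G$-action decomposes $\Jac(\Sigma_i)$ into isotypic components whose elliptic factors and multiplicities can be computed explicitly from the character tables of the relevant subgroups; showing these decompositions differ gives an unconditional proof that $\Sigma_1 \not\cong \Sigma_2$.
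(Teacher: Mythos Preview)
Your route is quite different from the paper's, which is much shorter. The paper observes that each composite $\Sigma_i \to X \xrightarrow{f} \PP^1$ is a degree-$2|G|$ cover branched over the \emph{same} three points (the branch locus of $\pi_i\colon \Sigma_i \to X$ is $Z(q_i)$, which sits inside the ramification locus of the $G$-cover $f$). Thus each $\Sigma_i$ is encoded by a monodromy triple in $S_{2|G|}$, and one simply checks by computer that the two triples are not simultaneously conjugate. No analysis of automorphism groups or Jacobians enters.

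Regarding your proposal: the main line via $\Aut(\Sigma_i,\sigma_i)$ has exactly the gap you flag, and it is genuine---an abstract isomorphism $\Sigma_1 \cong \Sigma_2$ need not conjugate the canonical involutions, so comparing their centralisers is inconclusive without further input. Your fallback has an additional problem: there is no $G$-action on $\Sigma_1$. The divisor $Z(q_1)$ is the ramification locus of $X \to X/H_1$, and since $H_1 = \langle a,c\rangle$ is non-normal (in fact self-normalising) in $G$, this locus is not $G$-stable. Concretely, over the branch point of $f$ with monodromy $a^2c^2$ only three of the twelve preimages lie in $Z(q_1)$, whereas $G$ acts transitively on that fibre. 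Hence the $G$-action on $X$ does not lift to $\Sigma_1$, and the isotypic decomposition of $\Jac(\Sigma_1)$ you invoke is not available as stated. One could pass to the Galois closure of $\Sigma_i \to \PP^1$ and decompose its Jacobian instead, but that is considerably more work than the paper's direct monodromy comparison.
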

\begin{proof}
	The curves $\Sigma_1$ and $\Sigma_2$ are covers of $\PP^1$ via $\Sigma_i \to X \xrightarrow{f} \PP^1$, branched over three points.
	Both covers $\Sigma_i \to \PP^1$ have a monodromy representation with elements in the symmetric group $S_{2|G|}$.
	The covers $\Sigma_1$ and $\Sigma_2$ are isomorphic if and only if the elements of the monodromy representation are conjugated in $S_{2|G|}$, which is easily checked not to be the case.
\end{proof}

In the rest of this section, we will produce explicit flat pictures of the two quadratic differentials $(X, q_1)$ and $(X, q_2)$ of Example~\ref{exam:bifold_nonisom}.
Recall that $\Sigma_i$ is a square tiled surface by Lemma~\ref{lemm:pillow}.
We thus have the diagram
\[
\begin{tikzcd}[row sep=small]
	\Sigma_1\ar[ddrr, "2:1"]\ar[ddd] &&&& \Sigma_2\ar[ddll, "2:1"']\ar[ddd] \\
	\\
	&& X\ar[ddd]\ar[ddl]\ar[ddr] && \\
	E_1\ar[dr, "2:1"'] &&&& E_2\ar[dl, "2:1"] \\
	& A_1\ar[dr, "4:1"'] && A_2\ar[dl, "3:1"] & \\
	&& \PP^1 &&
\end{tikzcd}
\]

First we determine the tori $E_i$, which also determine the pillowcases $A_i$. We start with the torus $E_2$.
The map $A_2 \to \PP^1$ is cyclic of degree $3$ and totally ramified over two points, and unramified otherwise. One of the branch points of $E_2 \to A_2$ agrees with a ramification point of $A_2 \to \PP^1$, while the three other branch points of $E_2 \to A_2$ lie in one fiber of $A_2 \to \PP^1$.
Hence, we~may assume that the branch points of $E_2 \to A_2$ are $0$, $1$, $\zeta_3$ and $\zeta_3^2$.
Those four points have the cross-ratio $D\bigl(0,1;\zeta_3,\zeta_3^2\bigr) = \zeta_6^5$, and $\lambda(\zeta_3) = \zeta_6^5$, where $\lambda$ is the modular lambda function.
Hence up to isomorphism
\[
	E_2 \cong \CC / (\ZZ \oplus \zeta_3 \Z) \cong \CC / (\ZZ \oplus \zeta_6 \ZZ).
\]

The cross-ratio of the four branch points $0, 1, x,\infty\in\PP^1$ of $E_1 \to A_1$ is again uniquely determined by the ramification profile of the maps $E_1 \to A_1 \to \PP^1$ and is given by
\[
	D(0,\infty;1,x) = 15\sqrt{3}-26.
\]
One numerically computes $\tau := t\bigl(15\sqrt{3}-26\bigr)\approx 1 + 2.143182698915i$, where $t$ is the function from~\eqref{eq:elliptic_integral}, and for this $\tau$ we have
\[
	E_1 \cong \CC / (\ZZ \oplus \tau\ZZ).
\]

To obtain pictures of the pillowcase covers $X \to A_i$, we need to determine how to glue the copies of the pillowcase~$A_i$.
We describe how to obtain this information for a general pillowcase cover which is a~$G$-cover.
The idea is to compare two $G$-actions on the $|G|$-many copies of $A_i$.

Given a G-cover $X \to \PP^1$ of degree $d=|G|$ with monodromy datum $(g_1, \dots, g_4)$, the bijective map
\[
	m_g\colon \ G \to G, \qquad h \mapsto gh
\]
induces a map $\rho\colon G \to S_d$ when we identify $\sigma\colon G \to \{1, \dots, d\}$.
If $p \in \PP^1$ is a point (which is not a branch point), and we identify the fiber of $X \to \PP^1$ above $p$ with $\{1, \dots, d\}$ via $\sigma$, then the lift of a simple loop with basepoint $p$ around the $i$-th ramification point starting in $q \subseteq \{1, \dots, d\}$ will end in a point $\rho(g_i)(q)$.

\begin{figure}[!ht]
 \centering
	\includegraphics{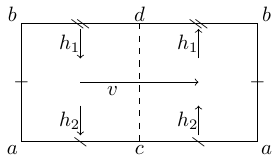}

\vspace{-2mm}

	\caption{A pillowcase.}	\label{fig:pillowcase}
\end{figure}

On the other hand, $X$ consists of $d$ copies of the pillowcase $\PP^1$ as depicted in Figure~\ref{fig:pillowcase}.
We~can again label those copies with $\{1, \dots, d\}$.
After choosing an orientation on the vertical and horizontal cylinder, $X$~is uniquely determined by the permutations $h_1, h_2, v \in S_d$, which indicate which copy of the pillowcase we reach when we leave a given copy in the direction indicated in the figure.

We label the four branch points by $a$, $b$, $c$, $d$ as in Figure~\ref{fig:pillowcase}.
Lifting a small clockwise cycle around point $a$ will act on the copies as $h_2 \circ v$, around $b$ as $h_1 \circ v^{-1}$, around $c$ as $h_2^{-1}$ and around~$d$ as~$h_1^{-1}$.

Hence, $h_1$, $h_2$, $v$ are given (up to the choice of $\sigma$ and hence $\rho$) by
\[
	h_1 = \rho(g_0)^{-1}, \qquad h_2 = \rho(g_1)^{-1}, \qquad v = h_2^{-1} \circ \rho(g_3) = \rho(g_4)^{-1} \circ h_1.
\]

Now we come back to the cases we are interested in.
For $X \to A_1$ we can choose $\sigma$ such that
\begin{align*}
	h_1 = h_2 = (1\;2\;3)(4\;5\;6)(7\;8\;9),\qquad
	v = (1\;5\;9)(2\;6\;7)(3\;4\;8),
\end{align*}
for $X \to A_2$ we can choose $\sigma$ such that
\begin{align*}
	h_1 = (1\;4)(2\;3)(5\;7)(6\;8)(9\;10)(11\;12),\qquad
	h_2 = v = (1\;7\;12)(2\;8\;11)(3\;5\;10)(4\;6\;9).
\end{align*}

Combined with the information about the tori $E_i$, this gives rise to the pictures in Figure~\ref{fig:covers}, where the horizontal edges are glued by half-translation as indicated by the labeling.

\begin{figure}[!ht]
		\centering
		\includegraphics{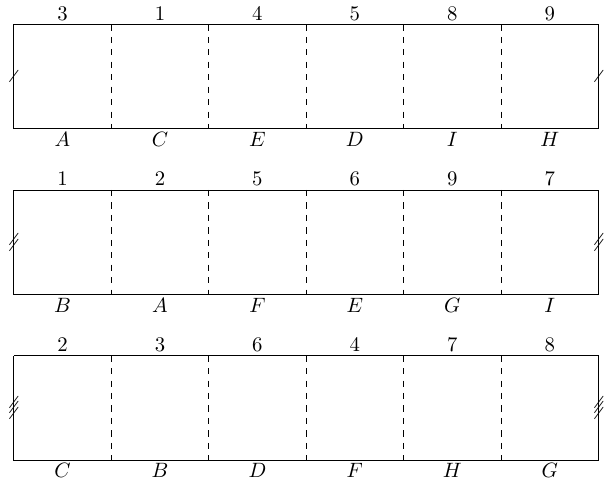}

{(a) The pillowcase cover $X \to A_1$}

\vspace{5mm}

		\includegraphics{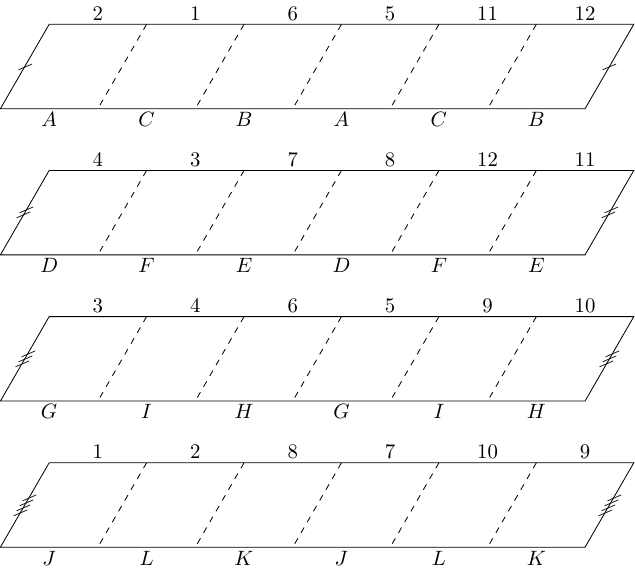}

{(b) The pillowcase cover $X \to A_2$}

	\caption{Two non-isomorphic pillowcase covers on the same curve $X$.}
	\label{fig:covers}
\end{figure}

\section{Visible Lagrangians over lines in the Hitchin base}

In this section, we study visible Lagrangians over a line $\calB'=\CC \underline{a} \subset \calB_{\SL(n,\CC)}$. In the first part, we~give an existence criterion using Proposition~\ref{prop:criterion}. In second part, for~$G=\SL(2,\CC)$ and $\underline{a}=q$ a~quadratic differential, we study the holomorphic symplectic subvariety~$\calI_q$, which is the proposed mirror dual by Theorem~\ref{theo:FM}. If $(X,q)$ is a uniform pillowcase cover, we will show that~$\calI_q$ is a~hyperholomorphic subvariety birational to Hausel's toy model.

\begin{theo}\label{theo:visi_Lagr_over_line} Let \smash{$\underline{a}=(0,\dots,0,a_n) \in \calB_{\SL(n,\CC)}^\reg$}. Then there exists a visible Lagrangian over~$\CC \underline{a}$, if and only if the spectral curve with its abelian differential $(\Sigma,\lambda)$ is parallelogram-tiled.
\end{theo}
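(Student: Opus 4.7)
The plan is to apply the $\SL(n,\CC)$-analogue of Proposition~\ref{prop:criterion} to translate the existence of a visible Lagrangian over $\CC\underline{a}$ into a period condition on the tautological differential $\lambda$ on $\Sigma$, and then to recognise this condition as the parallelogram-tiled property.

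First I would compute the image of the radial tangent direction $\underline{a}\in T_{\underline{a}}\calB_{\SL(n,\CC)}$ under the map $t$ of Proposition~\ref{prop:Baraglia}. For $\underline{a}=(0,\ldots,0,a_n)$ all intermediate terms collapse and
\[
  t(\underline{a}) \;=\; \frac{\pi^*a_n}{s_B} \;=\; -\frac{\lambda^n}{s_B} \;\in\; H^0(\Sigma,K_\Sigma)^-.
\]
Both $\lambda^{n-1}$ and $s_B$ are sections of $\pi^*K_X^{n-1}$ that vanish to order $n-1$ along the ramification divisor of $\pi$ and nowhere else, so their ratio is a nowhere-vanishing holomorphic function on $\Sigma$ and hence a non-zero constant. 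Thus $t(\underline{a})$ is a scalar multiple of $\lambda$, and the $\SL(n,\CC)$-variant of Proposition~\ref{prop:criterion} reduces the existence of a visible Lagrangian over $\CC\underline{a}$ to the existence of a codimension-one abelian subvariety $A\subset\Prym(\Sigma)$ whose tangent space is $\lambda^\perp\subset H^1(\Sigma,\calO_\Sigma)^-$. Unwinding Serre duality, such an $A$ exists exactly when the line $\CC\lambda\subset H^0(\Sigma,K_\Sigma)$ is defined over $\QQ$ with respect to $H_1(\Sigma,\ZZ)$, that is, when the absolute periods of $\lambda$ lie in some rank-$2$ subgroup $\Lambda\subset\CC$.

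For the direction parallelogram-tiled $\Rightarrow$ visible Lagrangian, I would take the cover $p\colon\Sigma\to E=\CC/\Lambda$ with $\lambda=p^*\omega$ and define $A$ as the intersection of $\Prym(\Sigma)$ with the kernel of the induced norm map $p_*\colon\Jac(\Sigma)\to E$; its tangent space is indeed $(\CC\cdot p^*\omega)^\perp=\lambda^\perp$. The visible Lagrangian is then the $A$-orbit of the Hitchin section. To globalise over the line $\CC\underline{a}$, I would use the isomorphisms of spectral curves induced by $\lambda\mapsto t^{1/n}\lambda$, under which both $A$ and the Hitchin section are equivariant.

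The harder direction is to upgrade the bare rank-$2$ period condition to an actual parallelogram-tiling: the integration map $p\colon\Sigma\to\CC/\Lambda$, $y\mapsto\int_{y_0}^y\lambda$, is well-defined by the absolute period condition, but its branch points are the images of the zeros of $\lambda$, which a priori form several points of $\CC/\Lambda$. The main obstacle is therefore to control the relative periods of $\lambda$. Here I would exploit the special structure of $\underline{a}=(0,\ldots,0,a_n)$: the spectral cover $\pi$ is cyclic Galois of degree $n$, and if $\sigma$ generates the Galois group with $\sigma^*\lambda=\zeta\lambda$ for a primitive $n$-th root of unity $\zeta$, then the zeros of $\lambda$ coincide with the fixed points of $\sigma$. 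For any path $\gamma$ between two zeros of $\lambda$, the cycle $\gamma-\sigma_*\gamma$ is closed and
\[
  \int_{\gamma-\sigma_*\gamma}\lambda \;=\; (1-\zeta)\int_\gamma\lambda
\]
is an absolute period, so the relative periods of $\lambda$ lie in $(1-\zeta)^{-1}\Lambda$. Replacing $\Lambda$ by this enlarged rank-$2$ lattice makes all zeros of $\lambda$ map to a single point of $\CC/\Lambda$, exhibiting $(\Sigma,\lambda)$ as parallelogram-tiled.
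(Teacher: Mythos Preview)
Your argument is structurally the same as the paper's: both directions hinge on the identification $t(\underline{a})\in\CC\lambda$ and on the norm/integration map to an elliptic curve. The only substantive difference is in the forward implication. The paper composes the given quotient $\psi:\Prym(\Sigma)\to E$ with the Abel--Prym map $y\mapsto\calO(y-\sigma y)$ and observes that the differential of $\psi\circ\mathsf{AP}$ is $(1-\zeta)\lambda$; this immediately gives a branched cover $\Sigma\to E$ whose ramification points (the zeros of $\lambda$, which are the fixed points of $\sigma$) all sit over $0\in E$. Your relative-period identity $(1-\zeta)\int_\gamma\lambda\in\Lambda$ is exactly the period-theoretic shadow of that differential computation, so the two approaches are really the same trick in different language.

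There is, however, a small gap in your version. You replace $\Lambda$ by the ``enlarged'' lattice $(1-\zeta)^{-1}\Lambda$, but for a generic rank-$2$ lattice the inclusion $\Lambda\subset(1-\zeta)^{-1}\Lambda$ fails (take $\Lambda=\ZZ\oplus i\ZZ$ and $\zeta=e^{2\pi i/3}$; then $(1-\zeta)\cdot 1\notin\Lambda$), and the subgroup $\Lambda+(1-\zeta)^{-1}\Lambda$ need not even be discrete. What saves you is the $\ZZ_n$-symmetry you are already exploiting: since $\sigma_*$ preserves $H_1(\Sigma,\ZZ)$ and $\int_{\sigma_* c}\lambda=\zeta\int_c\lambda$, the absolute period group satisfies $\zeta\Lambda=\Lambda$, hence $(1-\zeta)\Lambda\subset\Lambda$ and $\Lambda\subset(1-\zeta)^{-1}\Lambda$ as needed. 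Once you insert this one-line observation, your argument is complete and equivalent to the paper's.
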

\begin{proof}
 First, assume there exists a visible Lagrangian over~$\CC\underline{a}$. Let $(\Sigma,\lambda)$ be the spectral curve to $\underline{a}$. Then, by Proposition~\ref{prop:criterion}, there is an exact sequence of abelian varieties
 \[ 0 \to A \to \Prym(\Sigma_{\underline{a}}) \xrightarrow{\psi} E \to 0.
 \]
 Here $A$ is of codimension $1$ and $E$ is an elliptic curve. Under the assumption on $\underline{a}$, the spectral curve is $\ZZ_n$-Galois and $s_B=\lambda^{n-1}$ (up to a constant). Hence, the map $t$ of Proposition~\ref{prop:Baraglia} becomes
 \[ t\colon \ \bigoplus\limits_{i=2}^n H^0(X,K_X^i) \to H^0(\Sigma,K_\Sigma), \qquad \sum\limits_{i=2}^n \alpha_i X_i \mapsto \sum\limits_{i=2}^n \alpha_i \frac{\pi^*X_i}{\lambda^{i-1}}.
 \] In particular, the tangent vector $X_n=a_n$ is mapped to $\frac{\pi^*a_n}{\lambda^{n-1}}=\lambda$.
 Hence, the differential of the map $\psi$ can explicitly be written as
 \begin{align} \d \psi\colon \ H^1(\Sigma, \calO_\Sigma)^- \to T_0E, \qquad \alpha \mapsto c \int_\Sigma \alpha \wedge \lambda. \label{one-form}
 \end{align}
 Therefore, $\psi$ is given by $D=\sum a_i y_i \mapsto \sum a_i\int_{y_0}^{y_i} \lambda$ up to the choice of a point $y_0 \in \Sigma$. Denote by $\sigma$ a generator of the $\ZZ_n$ action on $\Sigma$. We want show that the composition $\psi \circ \mathrm{AP}$ with the Abel--Prym map
 \[ \mathrm{AP}\colon \ \Sigma \to \Prym(\Sigma), \qquad y \mapsto \calO(y-\sigma y)
 \]
 is a branched covering $p\colon \Sigma \to E$. Clearly, $\mathrm{AP}$ identifies all ramification points of $\pi\colon \Sigma \to X$. If~$\Sigma$ is not hyperelliptic, it is easy to see that these are the only points on $\Sigma$ that are identified. The spectral curve $\Sigma$ is never hyperelliptic by \cite[Lemma~4.1]{Baraglia}. By definition, $\sigma^*\lambda=\xi_n \lambda$ for some primitive $n$-th root of unity $\xi_n$. This implies that the differential of the composition $\psi \circ \mathrm{AP}$ is $(1-\xi) \lambda$. Hence, the differential of the composition is injective away from the ramification points of $\Sigma \to X$. In particular, $p=\psi \circ \mathrm{AP}\colon \Sigma \to E$ is a proper holomorphic map ramified at $Z(\lambda)$ over $0 \in E$. Furthermore, $\lambda$ considered as abelian differential has an order $n$ zero at each branch point and hence the points in $Z(\lambda)$ are $n+1:1$ ramifications. In particular, the pullback of an abelian differential~$\omega$ on~$E$ has the same divisor as~$\lambda$ and we can find a~specific~$\omega$ with $p^*\omega=\lambda$. Therefore, $(\Sigma,\lambda)$ is parallelogram-tiled.

 For the converse, let \smash{$\underline{a}=(0,\dots,0,a_n) \in \calB_{\SL(n,\CC)}^\reg$}, such that $(\Sigma,\lambda)$ is paralle\-lo\-gram-tiled, i.e., there is a covering $p\colon \Sigma \to E$, such that $\lambda=p^*\omega$ for some abelian differential $\omega$ on~$E$. The covering~$p$ induces a Norm map
 \[ \Nm_E\colon \ \Jac(\Sigma) \to E, \qquad D=\sum a_iy_i \mapsto \sum a_i \int_{y_0}^{y_i} \lambda.
 \] The restriction to $\Prym(\Sigma)$ defines the desired map
 \[ \psi=\Nm_E \rest_{\Prym}\colon \ \Prym(\Sigma) \to E.
 \] Let $D=(n-1)y - \sigma y - \dots - \sigma^{n-1}y$. Then $D \in \Prym(\Sigma)$ and $\Nm_E(D)= n \Nm_E(y)$. Hence, $\psi$~surjects onto $E$ being a divisible group. Its differential is the map~\eqref{one-form}. As $\Sigma$ does not change, when multiplying $q$ with a scalar, we can define an abelian subscheme $\ker(\psi) \subset \Prym(\Sigma/\CC^\times q)$ that satisfies the criterion of Proposition~\ref{prop:criterion}. To obtain a visible Lagrangian over~$\CC q$, we can act with the abelian scheme $\ker(\psi)$ on any section of $\Hit \rest_{\CC \underline{a}}$. To obtain a concrete example, we~may choose the Hitchin section.
\end{proof}

\begin{coro}\label{coro:visible_Lagr_over_line_sl2C}
 Let \smash{$q \in \calB_{\SL(2,\CC)}^\reg$} be a quadratic differential with simple zeros only. Then there exists a visible Lagrangian over~$\CC q$ if and only if~$(X,q)$ is a pillowcase cover.
\end{coro}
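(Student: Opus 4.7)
The plan is to note that this corollary is an almost immediate consequence of the two main technical results already established: Theorem~\ref{theo:visi_Lagr_over_line} and Lemma~\ref{lemm:pillow}. The key observation is that in the $\SL(2,\CC)$ case the Hitchin base is simply $H^0(X,K_X^2)$, so any element $\underline{a}\in\calB_{\SL(2,\CC)}$ automatically satisfies the shape hypothesis of Theorem~\ref{theo:visi_Lagr_over_line} (with $n=2$ there are no intermediate coefficients to kill, and $\underline{a}=(a_2)=q$).

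First I would invoke Theorem~\ref{theo:visi_Lagr_over_line} with $n=2$ and $\underline{a}=q\in\calB^\reg_{\SL(2,\CC)}$. Since $q$ has simple zeros only, $q$ lies in the regular locus and the associated spectral curve $(\Sigma_q,\lambda)$ is smooth. The theorem then gives: a visible Lagrangian over the line $\CC q$ exists if and only if the canonical cover $(\Sigma_q,\lambda)$ is parallelogram-tiled.

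Next I would apply Lemma~\ref{lemm:pillow}, which identifies the canonical cover of a half-translation surface $(X,q)$ as being parallelogram-tiled precisely when $(X,q)$ itself is a pillowcase cover. Composing these two equivalences gives the statement. The only thing to double-check is that the spectral curve appearing in Theorem~\ref{theo:visi_Lagr_over_line} coincides, as a translation surface $(\Sigma_q,\lambda)$, with the canonical cover of $(X,q)$ in the sense of Section~\ref{sec:rhomb-tiled-surf}; this is exactly the identification recorded there (the spectral curve is the unique branched double cover on which $q$ admits a square root). There is no real obstacle here — the entire content lives in the two earlier results — so the proof is essentially one sentence chaining the two equivalences together.
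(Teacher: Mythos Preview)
Your proposal is correct and matches the paper's own proof, which is literally the one sentence ``This is immediate from Theorem~\ref{theo:visi_Lagr_over_line} and Lemma~\ref{lemm:pillow}.'' Your additional remark that for $n=2$ every $\underline{a}\in\calB_{\SL(2,\CC)}$ automatically has the form $(0,\dots,0,a_n)$ required by Theorem~\ref{theo:visi_Lagr_over_line} is exactly the point that makes the reduction work.
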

\begin{proof} This is immediate from Theorem~\ref{theo:visi_Lagr_over_line} and Lemma~\ref{lemm:pillow}.
\end{proof}

\subsection{The Fourier--Mukai dual}
Now, we consider the Fourier--Mukai dual of the visible Lagrangian defined above in the case of $G=\SL(2,\CC)$. Let $(X,q)$ be a pillowcase cover and $\calL_q \to \calB'=\CC q$ the visible Lagrangian defined as the closure of the orbit of the abelian subscheme $A \subset \Prym(\Sigma/\CC^\times q)$ on the Hitchin section. In particular, $\calB'^\reg=\CC^\times q$. The fibers of $\calL \to \calB'^\reg$ are of codimension 1 in the $\SL(2,\CC)$-Hitchin fibers over $\calB'^\reg$. Hence, by Theorem~\ref{theo:FM} the fiber-wise Fourier--Mukai transformation of the structure sheaf of $\calL$ is supported on an elliptic surface $I_q \rightarrow \CC^\times q$ obtained by acting with the abelian scheme $E \subset \Prym(\Sigma/\CC^\times q)^\vee$ on the Hitchin section of $\calM_{\PGL(2,\CC)}$. Its closure $\calI_q=\overline{I_q}$ is the proposed mirror dual. We have the following proposition.
\begin{prop}[\cite{HauselToy}]\label{prop:Hausel_toy} The subvariety $\calI_q \!\subset\! \calM_{\PSL(n,\CC)}$ is birational to Hausel's toy model~$\calM_\toy$.
\end{prop}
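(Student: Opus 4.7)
The plan is to exhibit both $\calI_q$ and Hausel's toy model $\calM_\toy$ as isotrivial elliptic fibrations over $\CC$ whose generic fibre is the elliptic curve $E$ of the pillowcase cover, and then to deduce the birational equivalence from this.

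First I would unravel the construction of $\calI_q$ over the regular base $\CC^\times q$. By the discussion preceding the proposition, $\calI_q|_{\CC^\times q}$ is the orbit of the Hitchin section of $\calM_{\PGL(2,\CC)}$ under the abelian subscheme $E\subset \Prym(\Sigma/\CC^\times q)^\vee$ produced in the proof of Theorem~\ref{theo:visi_Lagr_over_line}. Since rescaling $q$ by $t\in\CC^\times$ leaves the zero divisor invariant, the spectral curve $\Sigma_{tq}$ is canonically isomorphic to $\Sigma_q$, with the tautological differential multiplied by $\sqrt t$. The pillowcase map $p:\Sigma\to E$ is thus the same for all $t$, and the quotient elliptic factor forms the constant family $E\times\CC^\times$. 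Consequently $\calI_q|_{\CC^\times q}\to\CC^\times$ is an isotrivial elliptic surface with fibre $E$ and a canonical section inherited from the Hitchin section.

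Second I would recall the construction of Hausel's toy model from \cite{HauselToy}. The toy model $\calM_\toy$ is an elliptic surface over $\CC$ with an isotrivial Hitchin-type fibration and a distinguished section; matching parameters with the pillowcase datum $(\PP^1,\eta,0,1,\infty,x)$, its generic fibre is the elliptic curve $E=\CC/(\ZZ+\tau\ZZ)$ with $\lambda(\tau)=x$, that is, the canonical double cover of the pillowcase.

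Finally, two isotrivial elliptic fibrations over $\CC$ with the same generic fibre and a section are, after a possible finite base change, twists of $E\times\CC$ by a constant cocycle in $\Aut(E)$, hence isomorphic on a Zariski open subset of the base. This yields a birational map $\calI_q\dashrightarrow\calM_\toy$ over $\CC$. The main obstacle I expect is the explicit matching of parameters: identifying the elliptic curve appearing in Hausel's toy model with the canonical double cover $E$ of the pillowcase, which amounts to translating Hausel's description into the flat-surface language of Section~\ref{sec:rhomb-tiled-surf}, and accounting for the degree-two base change coming from the $\sqrt t$ rescaling of $\lambda$. Once both sides are identified as isotrivial elliptic surfaces with fibre $E$ and section, birationality follows automatically and no analysis of the special fibre at $t=0$ is required.
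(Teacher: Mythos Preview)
Your overall strategy---identify both $\calI_q$ and $\calM_\toy$ as isotrivial elliptic surfaces over $\CC$ with fibre $E$ and a section---matches the paper's, but your final step contains a genuine gap. The assertion that ``two isotrivial elliptic fibrations over $\CC$ with the same generic fibre and a section are \dots\ isomorphic on a Zariski open subset of the base'' is false. An isotrivial $E$-fibration with section over $\CC^\times$ is determined by its monodromy $\pi_1(\CC^\times)\to\Aut(E)$, and different monodromies give non-isomorphic, indeed non-birational, surfaces. Concretely, the trivial family $E\times\CC^\times$ has function field $\CC(E)(t)$ and irregularity $1$, whereas the $(-1)$-twist $(E\times\CC^\times)/\langle(\tau,-1)\rangle$ has function field $\CC(x,s,w)/(w^2-sf(x))=\CC(x,w)$ and is rational. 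These are not birational, yet both are isotrivial with fibre $E$ and section.

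What the paper actually does, and what you must do, is compute the monodromy of $\calI_q\to\CC q$ around $0$ and match it with that of $\calM_\toy$. Scaling $q$ by $e^{i\phi}$ multiplies $\lambda$ by $e^{i\phi/2}$, so a full loop in $\CC^\times q$ sends $\lambda\mapsto-\lambda$ and hence acts as $-1$ on the elliptic factor; this is exactly the $(\tau\times\tau')$-quotient defining $M=(E\times\CC)/(\tau\times\tau')$ in Hausel's construction. You already flagged the $\sqrt t$ rescaling as an ``obstacle'', but it is not a technical wrinkle---it is the entire content of the identification. Once you verify that both sides carry the $-1$ monodromy, you get an actual isomorphism $\calI_q\rest_{\CC^\times q}\cong M\setminus\{y=0\}$ (not merely birationality after base change), and the proposition follows.
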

\begin{proof}
 Hausel's toy model is constructed as an elliptic surface as follows. Take $\bigl(\PP^1,p_1,p_2,p_3,p_4\bigr)$ and consider the elliptic curve $E \to \PP^1$, which is the canonical cover of the pillowcase with involution~$\tau$. Let further $\tau'$ be the involution $-1\colon \CC \to \CC$. Consider $M=E \times \CC/(\tau \times \tau')$. This orbifold has 4 $\ZZ_2$-points $\wh{p}_i \times 0$. The projection to the second factor induces the map $M \to \CC$, $(x,y) \mapsto y^2 $ with generic fiber $E$. Blowing up the 4~orbifold points, we obtain a smooth surface~-- the toy model $\calM_{\toy}$.

 The subintegrable system $\calI_q$ associated to $\calB'=\CC q$ for a pillowcase cover $(X,q)$ is an elliptic fibration over $\CC^\times q \subset \calB'$. The modulus of the elliptic curve is constant and determined by the four points $p_1,\dots,p_4$ on the pillowcase. As for Hausel's toy model, the monodromy around $0 \in \CC q$ is given by $-1$. In fact, if we scale our quadratic differential with $e^{i\phi}$, the abelian differential is multiplied by $e^{i\frac12 \phi}$. This defines an isomorphism between $M\setminus{\{y=0\}}$ and $\calI_q \rest_{\CC^\times q}$.
\end{proof}

For uniform pillowcase covers $(X,q)$, we can indeed prove that $\calI_q \subset \calM_{\SL(2,\CC)}(X)$ is a hyperholomorphic subvariety. This will be achieved by defining a morphism $\Theta$ from a moduli space of semi-stable parabolic $\SL(2,\CC)$-Higgs bundles to $\calM_{\SL(2,\CC)}(X)$, such that the image of $\Theta$ is $\calI_q$. Denote by $\calM_{\ualpha}=\calM_{\ualpha}(\PP^1,D)$ the moduli space of semi-stable strongly parabolic $\SL(2,\CC)$-Higgs bundle $(\calE,\Phi)$ on $\PP^1$ with $D=0+1+\infty+y$ with parabolic weights $\ualpha=\left((\alpha_{y,1},\alpha_{y,2})_{y \in D}\right)$. Here~$\calE$ is a rank $2$ bundle of determinant $\det(\calE)=\calO(-4)$ together with complete flag $\{0 \} \subsetneq \calE_{y,1} \subsetneq \calE_{y,2}= \calE_y$ at each $y \in D$. By our convention, flags are ascending and weights descending, i.e., $\alpha_{y,1} > \alpha_{y,2}$. The Higgs field $\Phi \in H^0(\PP^1,\End_0(\calE)\otimes K(D))$ must preserve these flags, in the sense that $\Res(\Phi)(\calE_y) \subset \calE_{y,1}$. The Higgs bundle $(\calE,\Phi)$ is called semi-stable if and only if for each sub-Higgs bundle $(L,\psi) \subset (\calE,\Phi)$ we have $\pardeg(L,\psi)\leq \pardeg(\calE,\Phi)$. On $\bigl(\PP^1,D\bigr)$, this condition is automatically satisfied for all Higgs bundles that are not nilpotent. $\calM_{\ualpha}\bigl(\PP^1,D\bigr)$ carries a hyperk\"ahler metric defined by interpreting it as the moduli space of flat logarithmic connections with certain fix monodromy at $D$ via non-abelian Hodge theory. See~\cite{FMSW} for more details. For generic weights, the moduli space $\calM_{\underline{\alpha}}$ is isomorphic to the smooth surface $\calM_{\toy}$ constructed above.

Recall from Definition~\ref{defi:uniform_pillow_cover} that for a uniform pillowcase cover $(X,q)$ every fiber of $\check{p}\colon X \to \PP^1$ over one of the four marked points on $\PP^1$ has a well-defined ramification index.

\begin{theo}\label{theo:Theta}
 Let $(X,q)$ be a uniform pillowcase cover with simple zeros only. We define so-called compatible parabolic weights at $y \in D$ of ramification index $i$ by \smash{$\bigl(\alpha_0=\frac{i+1}{i+2}, \alpha_1=\frac{1}{i+2}\bigr)$}. Let $\calM_{\ualpha}\bigl(\PP^1,D\bigr)$ be the moduli space of strongly parabolic Higgs bundles on $\bigl(\PP^1,D\bigr)$ with compatible parabolic weights. Then there exists a holomorphic map
 \[ \Theta\colon \ \calM_{\ualpha}\bigl(\PP^1,D\bigr) \to \calM_{\SL(2,\CC)}(X)
 \] such that
 \begin{itemize}\itemsep=0pt
 \item[$(i)$] it maps the Hitchin section of $\calM_{\ualpha}$ to the Hitchin section of $\calM_{\SL(2,\CC)}(X)$ restricted to $\CC q$.
 \item[$(ii)$] for all $c \in \CC^\times$, it makes the following diagram commute
 \[ \begin{tikzcd} E \ar[r,"p^*"]\ar[d,"\cong"] & \Prym(\Sigma_q) \ar[d,"\cong"] \\
 \Hit^{-1}(c\eta) \ar[r,"\Theta"] & \Hit^{-1}(cq),
 \end{tikzcd} \]
 where we used the Hitchin section to identify the corresponding Hitchin fibers with abelian varieties.
 \item[$(iii)$] $\Theta$ can be promoted to a morphism of hermitian Higgs bundles, such that solutions to the Hitchin equation on \smash{$\bigl(\PP^1,D\bigr)$} are mapped to solutions to the Hitchin equation on~$X$.
 \end{itemize}
\end{theo}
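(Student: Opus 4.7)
I would construct $\Theta$ as a form of parabolic pullback along $\check{p}: X \to \PP^1$, followed by a Hecke modification at the branch divisor and a determinant twist. The compatible weights are dictated precisely by the ramification indices so that this procedure produces a genuine (non-parabolic) $\SL(2,\CC)$-Higgs bundle on $X$.

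Given $(E,\Phi,E_\bullet,\ualpha)$ with compatible weights, the Higgs field $\check{p}^*\Phi$ takes values in $\check{p}^*K_{\PP^1}(D)=K_X(B)$, where $B\subset X$ is the reduced branch divisor. At each $x\in B$ over $y\in D$ with ramification index $i_x$, the residue $i_x\Res_y(\Phi)$ is nilpotent with image in $\check{p}^*E_{y,1}$ by strong parabolicity. A Hecke modification of $\check{p}^*E$ along these flag lines at the branch points, combined with a twist by a fixed line bundle to normalize $\det=\calO_X$, yields an $\SL(2,\CC)$-Higgs bundle $(E',\Phi')=\Theta(E,\Phi)$ on $X$ with $\Phi'\in H^0(X,\End_0 E'\otimes K_X)$. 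The construction is functorial and descends to a holomorphic map $\Theta:\calM_{\ualpha}(\PP^1,D)\to\calM_{\SL(2,\CC)}(X)$. Equivalently, via the spectral correspondence, $\Theta$ corresponds to pullback of spectral line bundles along the natural map $p:\Sigma_q\to E$ of Lemma~\ref{lemm:pillow}: by flat base change along the fiber square $\Sigma_q\to X\times_{\PP^1} E$, one has $\pi_* p^* = \check{p}^*(\pi_E)_*$, identifying the two descriptions.

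Properties (i) and (ii) then follow from the spectral description. For (i), the parabolic Hitchin section over $c\eta$ is the companion-matrix Higgs pair on $K_{\PP^1}(D)^{1/2}\oplus K_{\PP^1}(D)^{-1/2}$, with flag given by the second summand and the specified weights; applying $\Theta$, the underlying bundle becomes $K_X^{1/2}\oplus K_X^{-1/2}$ and the Higgs field the companion matrix for $cq=c\check{p}^*\eta$, which is the image of $cq$ under the Hitchin section of $\calM_{\SL(2,\CC)}(X)$. For (ii), the Hitchin fiber of $\calM_{\ualpha}$ at $c\eta$ is identified with the elliptic curve $E$ via the Hitchin section and the spectral correspondence, and $\Theta$ acts on fibers by $p^*:E\to\Prym(\Sigma_q)$; the Hitchin sections provide compatible basepoints so the diagram commutes.

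The main obstacle is (iii), the promotion of $\Theta$ to a morphism of hermitian Higgs bundles. I would argue that a solution of the parabolic Hitchin equation with the compatible weights pulls back through $\Theta$ to a smooth solution on $X$. Locally, a Hermitian-Einstein metric $h$ with parabolic weight $\alpha$ at $y$ satisfies $|s|_h^2\sim|z|^{2\alpha}$ in the weight-$\alpha$ graded piece, so after pullback via $z=w^{i_x}$ the metric behaves as $|w|^{2i_x\alpha}$. The compatible weights $\alpha_0=(i+1)/(i+2)$, $\alpha_1=1/(i+2)$, together with the integer shifts induced by the Hecke modification and the determinant twist, are arranged so that the resulting hermitian structure on $E'$ extends across $B$ to a smooth solution of the Hitchin equation on $X$. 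A direct local computation for $i_x\in\{2,3\}$, the only possibilities allowed by the simple zeros condition, is needed to verify this extension and match the flat-connection side of non-abelian Hodge theory. Together with (i) and (ii), this establishes $\Theta$ as a morphism of hermitian Higgs bundles, and pulling back the hyperkähler structure of $\calM_{\ualpha}$ yields the hyperholomorphic structure on $\calI_q$ used in Corollary~\ref{coro:hyperholomorphic}.
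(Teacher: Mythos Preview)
Your proposal is correct and follows essentially the same route as the paper: define $\Theta$ by pulling back along $\check{p}$, twisting by a fixed line bundle (the paper uses $\check{p}^*\calO(3)\otimes K_X^{-1/2}$), and performing the Hecke modification along the pulled-back flag at the branch divisor; then verify (i) by direct computation on the companion Higgs pair, (ii) by tracking spectral line bundles through the square of coverings, and (iii) by the local metric computation in the compatible weights. One small point to align with the paper's conventions: the ramification index $i$ in the statement is defined so that the local degree is $i{+}1$, hence the two relevant cases are $i\in\{1,2\}$ (i.e.\ $2{:}1$ and $3{:}1$ branching), not $i_x\in\{2,3\}$; also note that polystability of $\Theta(E,\Phi)$ is not automatic from the construction and is established, as you implicitly do, only after producing the smooth harmonic metric in part (iii).
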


\begin{proof}
 Recall the square of coverings of Figure~\ref{fig:square-of-covers}. The morphism $\Theta$ is given by a Hecke modified pullback along $\check{p}$. Let $(\calE,\Phi) \in \calM_{\ualpha}$. First define
 \[ (\calE',\Phi')= \bigl(\check{p}^*(\calE\otimes\calO(3))\otimes K_X^{-\frac12}, \check{p}^*\Phi\bigr).
 \]
 This is a meromorphic Higgs bundle on $X$ with $\tr(\Phi')=0$. By assumption of $q$ having simple zeros only all points in $\check{p}^{-1}(D)$ are ramification points of $\check{p}$, which are $2:1$ or $3:1$. Let $R_i \in \Div^+(X)$ be the divisor that has weight $1$ at the branch points that are $(i+1):1$ and $R=R_1 +R_2$. The pullback of the quasi-parabolic structure defines a quasi-parabolic structure on $\calE'$ at $R$ given by $\calE'_{x,1}:=\check{p}^*\calE_{\check{p}x,1} \subset \calE_{x}$. Now we define a Hecke modification
 \[ 0 \to \bigl(\hat{\calE},\hat{\Phi}\bigr) \to (\calE',\Phi') \to \bigoplus_{x \in \supp R} \calE'_{x}/\calE'_{x,1} \to 0.
 \]
 Then the map $\Theta$ is defined as
 $\Theta\colon (\calE,\Phi) \mapsto (\hat{\calE},\hat{\Phi})$.

 \begin{figure}[h]
$$
 \begin{tikzcd}
	& (\Sigma,\lambda) \ar[rd,"p"] \ar[ld,"\pi" above]& \\ (X,q) \ar[rd,"\check{p}"] & & (E, \omega) \ar[ld,"\check{\pi}"] \\ & \bigl(\PP^1,\eta\bigr) &
 \end{tikzcd}\vspace{-3mm}
$$
 \caption{Square of coverings associated to a pillowcase.}\label{fig:square-of-covers}
 \end{figure}
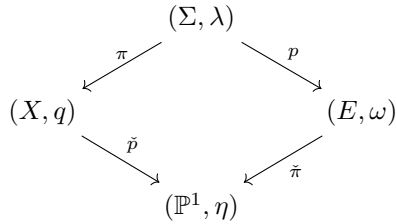

 \textit{Well-definedness:} First, we have to show that $\Theta$ is well defined. We have
 \[ \check{p}^*\calO_{\PP^1}(-2)=\check{p}^*K_{\PP^1}=K_X(-R_1-2R_2)
 \]
 and $\div(q)=R_2$. Hence, the determinant of $E'$ computes to
 \[ \det(\calE')=\check{p}^*\calO(2) \otimes K_X^{-1}=K_X^{-2}(R_1+2R_2)=\calO(R_1+R_2).
 \]
 Therefore, the determinant of $\hat{\calE}$ is $\det\bigl(\hat{\calE}\bigr)=\det(\calE')(-R_1-R_2)=\calO_X$.

 To show that the Higgs field $\hat{\Phi}$ is holomorphic we do a local computation. Locally at each $y \in D$, we can find a frame $s_1$, $s_2$ of $\calE$ adapted to the parabolic structure such that
 \begin{align} \Phi= \begin{pmatrix} \phi_0(z) \d z & \phi_1(z) \dfrac{\d z}{z} \vspace{1mm}\\ \phi_2(z) \d z & -\phi_0(z) \d z \end{pmatrix}.
 \label{eq:local-model-Phi}
 \end{align}
 and $\calE_{y,1}=\langle s_1\rest_{y} \rangle$. Hence, at all ramification points $x \in \check{p}^{-1}D$ the pullback has the form
 \begin{align} \label{eq:local-model-Phi'}
 \check{p}^* \Phi= \begin{pmatrix} \phi_0\bigl(w^k\bigr) w^{k-1} \d w & \phi_1\bigl(w^k\bigr) \dfrac{ \d w}{w} \vspace{1mm}\\ \phi_2\bigl(w^k\bigr)w^{k-1} \d w & -\phi_0\bigl(w^k\bigr) w^{k-1} \d w \end{pmatrix},
 \end{align}
 where $w$ is a coordinate centered at $x$, such that $\check{p}\colon w \mapsto z=w^k$. By assumption, $k=2,3$. The pullback quasi-parabolic structure is given by $\calE'_{x,1}=\langle \check{p}^*s_1 \rest_y\rangle$. Now, it is an easy computation to see that the Hecke modification modifies the Higgs field to
 \[ \hat{\Phi} = \begin{pmatrix} \phi_0\bigl(w^k\bigr) w^{k-1} \d w & \phi_1\bigl(w^k\bigr) \d w \\[1mm] \phi_2\bigl(w^k\bigr)w^{k-2} \d w & -\phi_0\bigl(w^k\bigr) w^{k-1} \d w \end{pmatrix}.
 \]
 Hence, indeed $(\hat{\calE},\hat{\Phi})$ defines a $\SL(2,\CC)$-Higgs bundle. Its polystability will follow from the existence of a solution to Hitchin's equation at the end of the proof.

 \textit{Hitchin sections:} Now, we want to apply this morphism to a point in the Hitchin section. We~identify the Hitchin base of $\calM_{\ualpha}$ as $\{ c \eta \mid c \in \CC\}$ for a fixed quadratic differential $\eta$ with simple poles at~$D$. Denote by $q=\check{p}^*\eta$ its pullback. Then a point $(\calE,\Phi)$ in the Hitchin section of $\calM_{\ualpha}$ is given by
 \[ (\calE,\Phi)=\left( \calO(-1) \oplus \calO(-3), \begin{pmatrix} 0 & c \\ \eta & 0 \end{pmatrix} \right) \in \calM_{\ualpha}.
 \]
 After pullback and tensoring, we obtain
 \[ (\calE',\Phi')=\left( K^{\frac12}(R_1+R_2) \oplus K^{-\frac12}, \begin{pmatrix} 0 & c \check{p}^*1 \\ \check{p}^* \eta & 0 \end{pmatrix} \right).
 \]
 Here $\check{p}^*1 \in \check{p}^*\calO(2) \otimes \check{p}^*K_{\PP^1}$ has a zero of order $1$ at each $2:1$ ramification point and a zero of order $2$ at each $3:1$ ramification point. The pullback quasi-parabolic structure at $\check{p}^{-1}D$ is given by the second coordinate with respect to the splitting. Hence, the Hecke modification yields
 \[ \Theta(\calE,\Phi)=\left( K^{\frac12} \oplus K^{-\frac12}, \begin{pmatrix} 0 & cq \\ 1 & 0 \end{pmatrix} \right)=s_H(cq)
 \]
 as asserted.

 \textit{Compatibility with pullback by $p$:} We show that this map extends the spectral correspondence on the regular locus. First we showed above that the Hitchin section of $\calM_{\underline{\alpha}}$ is mapped on~the Hitchin section of $\calM_{\SL(2,\CC)}(X)$. It is easy to see that the eigen-line bundle on $E$ of the Hitchin section of $\calM_{\underline{\alpha}}$ is $\check{\pi}^*\calO(-3)$. Similarly, the eigen-line bundle on $\Sigma$ of the Hitchin section of~$\calM_{\SL(2,\CC)}(X)$ is~\smash{$\pi^*K_X^{-\frac12}$}. We use these two Hitchin sections to identify the fibers with the corresponding abelian varieties. Then $(\calE,\Phi) \in \calM_{\ualpha}$ corresponds to an element $L_1 \in \Jac(E)$. Similarly, $\bigl(\hat{\calE},\hat{\Phi}\bigr)$ corresponds to an element $L_2 \in \Prym(\Sigma_q)$. We need to show that $L_2=p^*L_1$. From the spectral correspondence, we have an exact sequence
 \[ 0 \to L_1 \otimes \check{\pi}^*\calO(-3) \to \check{\pi}^*\calE \xrightarrow{\check{\pi}^*\Phi-\omega \id_{\check{\pi}^* \calE}} \check{\pi}^* (\calE \otimes \calO(2) ).
 \]
 Tensoring with $\calO(3)$ and pulling back, we obtain
 \[ 0 \to p^*L_1 \to \pi^*\bigl(\calE' \otimes K_X^{\frac12}\bigr) \xrightarrow{\pi^*\Phi'-\lambda \id_{\check{\pi}^* \calE}} \pi^*\bigl(\calE' \otimes K_X^{\frac12}\bigr) \otimes (\check{\pi} \circ p)^*\calO(2).
 \]
 Here we used the commutativity of diagram in Figure~\ref{fig:square-of-covers}.
 Finally, twisting by \smash{$K_X^{-\frac12}$} we see that the line bundle associated to $\pi^*(\calE',\Phi')$ through the spectral correspondence is $p^*L_1$. The (pullback of the) Hecke modification $\hat{\calE} \to \calE$ can potentially change this line bundle by twisting it with a~divisor supported at~$\pi^{-1}R$. However, on the regular locus of $\calM_{\underline{\alpha}}$ the quasi-parabolic structure is uniquely determined through the Higgs field at each $p \in D$ (see \cite[Proposition~8.1]{FMSW}). Hence, we can compute the eigen-line bundle and the quasi-parabolic structure of $\pi^*(\calE',\Phi')$ with respect to the pullback of the local frame of~\eqref{eq:local-model-Phi}. Then it is easy to see that the eigen-line bundle \smash{$p^*L_1 \otimes K_x^{-\frac12}$} descends to a subbundle of $\pi^*(\hat{\calE},\hat{\Phi})$. Hence $p^*L_1=L_2$.

 \textit{Solutions to Hitchin equation:} Finally, we will show that a hermitian metric $h$ on ${(\calE,\Phi) \in \calM_{\ualpha}}$ that solves the Hitchin equation is transformed to a solution to the Hitchin equation for $\bigl(\hat{\calE},\hat{\Phi}\bigr) \in \calM_{\SL(2,\CC)}(X)$. First there is a section of $\calO(4)$ with divisors $D$. Promoting this section to have norm $1$ defines a singular hermitian metric on~$\calO(4)$ locally given by $\vert z \vert^{-2}$ at $y \in D$. This induces a singular hermitian metric on $\calO(1)$ and hence on $\calO(3)$. The latter is given by \smash{$\vert z \vert ^{-\frac32}$} at $y \in D$ and will be denoted by~$h_{\calO(3)}$. Similarly, $q \in H^0\bigl(X,K_X^2\bigr)$ induces a singular hermitian metric \smash{$h_{-\frac12 K_X}$} on \smash{$K_X^{-\frac 12}$} that is smooth away from $Z(q)$ and given by \smash{$\vert w \vert^{\frac12}$} locally at $x \in Z(q)$. It is easy to see that singular hermitian metrics defined in this way are automatically flat.

 We need to extend the morphism $\Theta$ to hermitian Higgs bundles $(\calE,\Phi,h)$. In the first step, we use the singular hermitian metrics defined above to obtain a hermitian metric on $\calE'$ by
 \[ (\calE',h')=\pi^* ((E,h) \otimes (\calO(3),h_{\calO(3)}) ) \otimes \bigl(K_X^{-\frac12},h_{-\frac12 K_X}\bigr).
 \]
 This hermitian metric is holomorphic on $X\setminus R$. The hermitian metric~$h'$ pulls back to a hermitian metric $\hat{h}$ on $\calE$ through the Hecke modification $\hat{\calE} \to \calE$ a priori holomorphic only on $X \setminus R$.

 Now, we start with a polystable Higgs bundle $(\calE,\Phi) \in \calM_{\alpha}$ and let $h$ be a solution to Hitchin's equation. By the flatness of the hermitian metrics $h_{\calO(3)}$ and \smash{$h_{-\frac12 K_X}$}, the resulting hermitian metric~$h'$ on $\calE$ will still be a solution to Hitchin's equation wherever it is smooth. By definition, the Hecke modification $\hat{\calE} \to \calE$ is an isomorphism on $X \setminus R$ and hence the induced metric~$\hat{h}$ is a solution to Hitchin's equation on this locus. To show that it defines a solution to Hitchin's equation on~$X$, we are left with showing that it extends smoothly over~$R$.

 To do so, we compute the local description of $\hat{h}$ at $x \in R$. The metric $h$ is adapted to the parabolic structure. Hence, at $y \in D$ we can find a local frame $s_1$, $s_2$ of $\calE$ such that firstly
 \[ h= \begin{pmatrix} \vert z \vert^{2\alpha_1} & \\ & \vert z \vert^{2\alpha_2} \end{pmatrix},
 \]
 secondly the Higgs field is given by \eqref{eq:local-model-Phi} and thirdly the quasi-parabolic structure is the ascending flag $\langle s_1 \rangle \subset \langle s_1,s_2 \rangle$. We have to consider two cases depending on the ramification index of the fiber over $y \in D$. We will only give the details for $y \in D$ of ramification index~$2$, i.e., $\check{p}^{-1}y$ is made up from $3:1$ ramification points.

 In this case, the compatible parabolic weight are \smash{$\ualpha_y=\bigl(\frac34, \frac14\bigr)$}, so that \smash{$h=\diag\bigl( \vert z \vert^{\frac{4}{3}}, \vert z \vert^{\frac{2}{3}}\bigr)$}. Tensoring with $(\calO(3),h_{\calO(3)})$, pulling back and then tensoring with \smash{$\bigl(K_X^{-\frac12},h_{-\frac12 K}\bigr)$}, we obtain a~local description for $h'$ at $x \in Z(q)$ with respect to the induced frame
 \[ h'= \check{p}^*\diag\bigl( \vert z \vert^{\frac{4}{3}-\frac32}, \vert z \vert^{\frac{2}{3}-\frac32}\bigr) \vert w \vert^{\frac12}= \diag\bigl( 1, \vert w \vert^{-2} \bigr).
 \]
 Here $w$ is local coordinate at $x \in \check{p}^{-1}y$, such that $\check{p}\colon w \mapsto w^3=z$. Finally, with respect to the frame $s_1$, $s_2$ the Hecke modification $\hat{\calE} \to \calE$ is given by $\diag( 1, w)$ and hence the induced metric on $\hat{\calE}$ is indeed smooth at $x \in R$ of ramification index~$2$. The case of ramification index~$1$ follows along the same lines. Hence, $\hat{h}$ defines a smooth solution to the Hitchin equation for the Higgs bundle $\bigl(\hat{\calE},\hat{\Phi}\bigr)$.

 In particular, for a polystable Higgs bundle $(\calE,\Phi) \in \calM_{\ualpha}$ the image $\Theta(\calE,\Phi)$ is polystable and hence indeed $\Theta$ defines a map of moduli spaces
 \[ \Theta\colon \ \calM_{\ualpha} \to \calM_{\SL(2,\CC)}(X).
 \]
 This finishes the proof.
\end{proof}

\begin{rema} If $(X,q)$ is an uniform pillowcase cover such that there is an odd number of $y \in D$ of ramification index $1$, then the compatible weights are generic in the sense that semi-stability implies stability. In particular, $\calM_{\ualpha}$ is the elliptic surface referred to as Hausel's toy model with the nilpotent cone being of Kodaira type $I_0^*$.
\end{rema}

\begin{coro}\label{coro:hyperholomorphic} Let $(X,q)$ be a uniform pillowcase cover. Then $\calI_q \subset \calM_{\PGL(2,\CC)}(X)$ is a hyperholomorphic subvariety.
\end{coro}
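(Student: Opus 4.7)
The plan is to realize $\calI_q$ as the image of the hyperkähler moduli space $\calM_{\ualpha}(\PP^1,D)$ under the composition $\delta \circ \Theta$, where $\Theta$ is the morphism constructed in Theorem~\ref{theo:Theta} and $\delta: \calM_{\SL(2,\CC)}(X) \to \calM_{\PGL(2,\CC)}(X)$ is the quotient by $\Jac(X)[2]$.

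First I would verify that $\delta\circ\Theta(\calM_{\ualpha}) = \calI_q$. By Theorem~\ref{theo:Theta}~(i) the Hitchin section of $\calM_{\ualpha}$ is sent to the restriction of the Hitchin section of $\calM_{\SL(2,\CC)}(X)$ over $\CC q$. By Theorem~\ref{theo:Theta}~(ii), $\Theta$ intertwines the elliptic fibers of $\calM_{\ualpha}$ with the action of $p^*E \subset \Prym(\Sigma_q)$ on the Hitchin section over $\CC q$. After applying $\delta$, the abelian subscheme $p^*E \subset \Prym(\Sigma/\CC^\times q)$ descends to the elliptic subscheme $E \subset \Prym(\Sigma/\CC^\times q)/\pi^*\Jac(X)[2] = \Prym(\Sigma/\CC^\times q)^\vee$ used to define $\calI_q$. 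Taking closures yields $\delta\circ\Theta(\calM_{\ualpha}) = \calI_q$.

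Next I would use part~(iii) of Theorem~\ref{theo:Theta} to upgrade $\Theta$ to a hyperkähler morphism. Via non-abelian Hodge theory, a solution of the parabolic Hitchin equation on $(\PP^1,D)$ produces the entire twistor family of complex structures on $\calM_{\ualpha}$, and similarly on $\calM_{\SL(2,\CC)}(X)$. Because $\Theta$ lifts to a map of hermitian Higgs bundles sending solutions to solutions, it is holomorphic with respect to every complex structure in the twistor sphere, not just the Dolbeault structure $I$. The quotient $\delta$ is a finite holomorphic symplectic map between smooth hyperkähler moduli spaces and hence preserves the full twistor family as well. Therefore $\calI_q$, being the image of a hyperkähler manifold under a twistor-compatible map, is complex analytic with respect to every complex structure $aI+bJ+cK$ on $\calM_{\PGL(2,\CC)}(X)$; this is the definition of a hyperholomorphic subvariety.

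The main obstacle will be making the hyperkähler-compatibility step rigorous. It is not automatic from ``preserves solutions of Hitchin's equation'' that the induced map of moduli is a twistor morphism: the two Hitchin equations live on different Riemann surfaces (parabolic equations on $\PP^1$ versus ordinary equations on $X$), and the $L^2$ pairings defining the hyperkähler structures differ. Concretely, one has to check that the metric $\hat h$ constructed in the proof of Theorem~\ref{theo:Theta} is the harmonic metric characterizing $\Theta(E,\Phi)$, so that pullback along $\check p$ identifies the family of flat connections $\d_A + \zeta\Phi + \zeta^{-1}\Phi^{*_h}$ associated to $(E,\Phi)$ with its counterpart on $X$ for every $\zeta \in \PP^1$. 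The uniform ramification hypothesis is used crucially to ensure the compatible parabolic weights that make the pullback harmonic metric extend smoothly across the ramification divisor; the verification should reduce to the local computation already carried out in the proof of Theorem~\ref{theo:Theta}, extended from the metric to the connection form.
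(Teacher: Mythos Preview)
Your proposal is correct and follows essentially the same approach as the paper: identify $\calI_q$ as the image of $\delta\circ\Theta$ using parts (i) and (ii) of Theorem~\ref{theo:Theta}, and then use part (iii) to conclude that $\Theta$ is holomorphic with respect to $I$, $J$, and $K$, so its image (and hence its image under $\delta$) is hyperholomorphic. The paper asserts the passage from ``preserves solutions of the Hitchin equation'' to ``holomorphic in all three complex structures'' without the additional verification you flag as an obstacle, so your caution there goes beyond what the paper provides rather than diverging from its strategy.
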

\begin{proof} By Theorem~\ref{theo:Theta}, $\Theta$ maps solutions to the Hitchin equation on $\bigl(\PP^1,D\bigr)$ to solutions to the Hitchin equation on~$X$. Hence, it is holomorphic not only with respect to the holomorphic structure~$I$, but also with respect to the holomorphic structure~$J$ and~$K$ on the moduli spaces $\calM_{\ualpha}\bigl(\PP^1,D\bigr)$ and $\calM_{\SL(2,\CC)}(X)$. In particular, its image is a hyperholomorphic subvariety. However, Theorem~\ref{theo:Theta}\,(i) and~(ii) shows that $\Theta$ restricts to an isomorphism from the regular locus of $\calM_{\ualpha}$ to the torsor $\calI' \subset \calM_{\SL(2,\CC)}(X)$ obtained by acting with the abelian scheme $E \subset \Prym(\Sigma/\CC^\times q )$ over $\CC^\times q$ on the Hitchin section. Hence, $\overline{\calI}'$ the image of $\Theta$ is a hyperholomorphic subvariety. In particular, its image $\calI_q$ under the quotient map $\delta\colon \calM_{\SL(2,\CC)}(X) \to \calM_{\PGL(2,\CC)}(X)$ is hyperholomorphic.
\end{proof}

\subsection*{Acknowledgements} Thanks to Mirko Mauri, whose question initiated this work. Thanks to Martin M\"oller for his interest, many inspiring discussions as well as the suggestion to look at parallelogram-tiled surfaces. Thanks to Jochen Heinloth, Andr\'e Oliveira and Tom Sutherland for discussion. We~thank the~referees for their valuable comments and corrections. Research of the authors is supported by the Deutsche Forschungsgemeinschaft (DFG, German Research Foundation)~-- Project-ID~444845124~-- TRR~326.

\pdfbookmark[1]{References}{ref}
\LastPageEnding

\end{document}